\newtheorem{Theorem}{Theorem}[section]
\newtheorem{Lemma}[Theorem]{Lemma}
\newtheorem{Proposition}[Theorem]{Proposition}
\newtheorem{Definition}[Theorem]{Definition}
\newtheorem{rem}[Theorem]{Remark}
\newtheorem{example}[Theorem]{Example}
\newcommand{\R}{\mathbb R}
\newcommand{\K}{\mathbb{K}}
\newcommand{\N}{\mathbb N}
\newcommand{\C}{\mathcal{C}}
\newcommand{\F}{\mathcal{F}}
\newcommand{\p}{\mathcal{P}}
\newcommand{\rel}{\mathcal{R}}
\newcommand{\mcc}{\mathcal{C}}
\title[Connections and covariant derivatives on vector pseudo-bundles]{On the geometry of diffeological vector pseudo-bundles and infinite dimensional vector bundles: automorphisms, connections and covariant derivatives}
\author{Jean-Pierre Magnot}
\address{$^*$  Univ. Angers, CNRS, LAREMA, SFR MATHSTIC, F-49000 Angers, France
	\\ and \\  Lyc\'ee Jeanne d'Arc, \\ Avenue de Grande Bretagne, \\ 63000 Clermont-Ferrand, France}
\email{magnot@math.cnrs.fr}
\email{jean-pîerr.magnot@ac-clermont.fr}
\begin{document}

	\begin{abstract}
		We consider here the category of diffeological vector pseudo-bundles, and study a possible extension of classical differential geometric tools on finite dimensional vector bundles, namely, the group of automorphisms, the frame bundle, the space of connection 1-forms and the space of covariant derivatives. Substential distinctions are highlighted in this generalized framework, among which the non-isomorphism between connection 1-forms and covariant derivatives. Applications not only include finite dimensional examples with singularities, but also infinite dimensional vector bundles.
	\end{abstract}
	
	\maketitle
	\vskip 12pt
	\textit{Keywords:} diffeological spaces, vector pseudo-bundles, automorphisms, differential operators, connexions.
	
	\textit{MSC (2020): 58A05, 53C15 } 
	\tableofcontents
	\section{Introduction}
	The extension of the classical framework of differential geometry to examples of interest is a long-term goal since Sophus Lie, Elie Cartan and Emmy Noether, methods got deeper and deeper insight in order to deal with more and more complex frameworks. 
	
	We consider here the setting of vector pseudo-bundles, a terminology which first appears in  \cite{GKO2000} to our knowledge but the objects under consideration are of interest since a much longer time. The point is to consider ``bundles'' which typical fiber can depend on the basepoint, in the category of vector spaces. Ekatarina Pervova \cite{pervova,pervova2,pervova2017,pervova2018} gave a new impulsion of the subject, motivated by the various generalizations of the notions  of tangent spaces and cotangent spaces in the diffeological setting, which all produce natural examples of such ``bundles'' with a vector space as fiber, which may differ in the same arcwise component. The simple example of the cross in the plane is explained as an example in \cite{Ma2020-3}. In her works, Pervova considered finite dimensional vector pseudo-bundles in the category of diffeological spaces. This is only a small class of diffeological spaces, while diffeological spaces include most infinite dimensional geometric object. 
	In fact, diffeological spaces, first defined by Souriau and Chen, is actually a not so badly developped framework for differential geometry, see the textbook \cite{Igdiff} which is now a not-so-up-to-date treatise of the basic concepts of the subject. This category includes, as subcategories, all the other categories of interest for differential geometry: finite dimensional manifolds, Hilbert, Banach or Fr\'echet manifolds, orbifolds. This is the category that we choose to deal with and that produces, as already mentioned, many examples of vector pseudo-bundles when considering tangent functors. 
	
	This work starts with the necessary preliminaries on diffeologies, in which we give some examples that appear to us of interest, some of them are already disseminated in the recent literature, and few of them are new to our knowledge. 
	Then it is necessary to extend first the notion of group of automorphims to the category of diffeological vector pseudo-bundles. The classical short exact sequence that decomposes the group of automorphisms of a finite dimensional vector bundle, see e.g.  \cite{Neeb2008}, extends to the group of automorphisms of a diffeological vector pseudo-bundle. It enables us to reduce the diffeology $\p$ of the diffeological vector pseudo-bundle $E$ to a weaker diffeology $\p_{Aut(E)}$ which separates in different connected components the various typical fibers of $E,$ that is, a connected component of $E$ in  $\p_{Aut(E)}$ is a diffeological vector bundle (with typical fiber, but maybe without local trivialization).
	
	With this new diffeology, we show that the definition of the frame bundle and of connections 1-forms extends quite straightway, as well as the bundle of vector space endomorphisms $End(E).$ This result appears to us quite natural, since the vector pseudo-bundle is now cut into connected components which are vector bundles. This result gives a geometric interpretation of a known result on diffeomorphisms of a space with singularities: isolated singularities are fixed points in the connected component of the identity map. But a problem still remains: recover a diffeology on all these objects that depends on $\p$ and not on the restricted diffeology $\p_{Aut(E)}.$ In this perspective, we only manage to ``link again'' $End(E)$ by bridging together the disconnected parts of $\p_{Aut(E)}$ through the plots of $\p,$ and we remark that we can define a generalized notion of covariant derivatives, that for an affine space parametrized by $End(E),$ while connection 1-forms of the frame bundle have very different aspects. 
	
	After all these descriptions, an outlook section seems very necessary to us, in order to highlight how our new objects fit with existing constructions. In this last section, we do not have the ambition to be exhaustive, but only to show how this generalized framework fits with a selected family of examples of interest for us. Namely, we describe the structures brought by this new approach on infinite dimensional vector bundles beyond Banach of ILB vector bundles, and we analyze some properies of some affine connections on a cone that e have defined in this work.   
	\section{Preliminaries on diffeologies and vector pseudo-bundles}
	This section provides the necessary background on diffeology and vector pseudo-bundles, mostly following \cite{MR2019,Ma2020-3} in the exposition while the main reference for diffeologies remains \cite{Igdiff}. A complementary non exhaustive bibliography is \cite{BIgKWa2014,BN2005,BT2014,CN,CSW2014,CW2014,Don,DN2007-1,DN2007-2,IgPhD,Leandre2002,pervova,Wa}.
	
	\subsection{Basics of Diffeology}\label{ss:diffeology}
	
	In this subsection we review the basics of the theory of diffeological spaces; in particular, their definition, categorical properties, as well as their induced topology. {{} The main idea of diffeologies (and Fr\"olicher spaces defined shortly after) is to replace the atlas of a classical manifold by other intrinsic objects that enable to define smoothness of mappings in a safe way, considering manifolds as a restricted class of examples. Many such settings have been developped independently. We choose these two settings because they carry nice properties such as cartesian closedness, carrying the necessary fundamental properties of e.g. calculus of variations, and also because they are very easy to use in a differential geometric way of thinking. The fundamental idea of these two settings consists in defining families of smooth maps, with mild conditions on them that ensure technical features of interest. }
	
	\begin{Definition}[Diffeology] \label{d:diffeology}
		Let $X$ be a set.  A \textbf{parametrisation} of $X$ is a
		map of sets
		$p \colon U \to X$ where $U$ is an open subset of Euclidean space (no fixed dimension).  A \textbf{diffeology} $\p$ on $X$ is a set of
		parametrisations satisfying the following three conditions:
		\begin{enumerate}
			\item (Covering) For every $x\in X$ and every non-negative integer
			$n$, the constant function $p\colon \R^n\to\{x\}\subseteq X$ is in
			$\p$.
			\item (Locality) Let $p\colon U\to X$ be a parametrisation such that for
			every $u\in U$ there exists an open neighbourhood $V\subseteq U$ of $u$
			satisfying $p|_V\in\p$. Then $p\in\p$.
			\item (Smooth Compatibility) Let $(p\colon U\to X)\in\p$.
			Then for every $n$, every open subset $V\subseteq\R^n$, and every
			smooth map $F\colon V\to U$, we have $p\circ F\in\p$.
		\end{enumerate}
		A set $X$ equipped with a diffeology $\p$ is called a
		\textbf{diffeological space}, and is {{} denote}d by $(X,\p)$.
		When the diffeology is understood, we will drop the symbol $\p$.
		The parametrisations $p\in\p$ are called \textbf{plots}.
	\end{Definition}
	
	{{}
		\noindent\textbf{Notation.} We recall that $\N^* = \{n \in \N \, | \, n \neq 0\}$ and that $\forall m \in \N^*, \N_m = \{1,...,m\} \subset \N.$}
	
	\begin{Definition}[Diffeologically Smooth Map]\label{d:diffeolmap}
		Let $(X,\p_X)$ and $(Y,\p_Y)$ be two diffeological
		spaces, and let $F \colon X \to Y$ be a map.  Then we say that $F$ is
		\textbf{diffeologically smooth} if for any plot $p \in \p_X$,
		$$F \circ p \in \p_Y.$$
	\end{Definition}
	
	Diffeological spaces with diffeologically smooth maps form a category. This category is complete and co-complete, and forms a quasi-topos (see \cite{BH}).
	
	\begin{Proposition}
		\cite{Sou,Igdiff} Let $(X',\p)$ be a diffeological space,
		and let $X$ be a set. Let $f:X\rightarrow X'$ be a map.
		We define $f^*(\p)$ the \textbf{pull-back diffeology} as {{} $$f^*( \p)= \left\{ p: O_p \rightarrow X \, |   f \circ p \in \p \right\}. $$ }
	\end{Proposition}
	
	\begin{Proposition} \cite{Sou,Igdiff} Let $(X,\p)$ be a diffeological space,
		and let $X'$ be a set. Let $f:X\rightarrow X'$ be a map.
		We define $f_*(\p)$ the \textbf{push-forward diffeology} as the coarsest (i.e. the smallest for inclusion) among the diffologies
		on $X'$, which contains $f \circ \p.$ \end{Proposition}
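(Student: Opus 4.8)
What has to be shown is that, among the diffeologies on $X'$ that contain $f\circ\p := \{f\circ p \, : \, p\in\p\}$, there is a smallest one for inclusion; $f_*(\p)$ is then just a name for that diffeology. Observe that $f\circ\p$ itself is in general \emph{not} a diffeology --- if, say, $f$ is not onto, the constant parametrisations at points of $X'\setminus f(X)$ are absent, so the covering axiom fails --- hence the statement is one of generation rather than of mere recognition.

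The plan is to produce $f_*(\p)$ as an intersection. First I would record the elementary closure property that any intersection $\bigcap_{i\in I}\p_i$ of diffeologies on a fixed set is again a diffeology: each of the three conditions of Definition~\ref{d:diffeology} passes to the intersection. The covering condition is immediate since every constant parametrisation lies in each $\p_i$. For locality, if $p$ restricts locally to members of $\bigcap_i\p_i$, then for every index $i$ it restricts locally to plots of $\p_i$, so $p\in\p_i$ by locality in $\p_i$, so $p\in\bigcap_i\p_i$; smooth compatibility is obtained by the same ``for every $i$'' argument applied to $p\circ F$.

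Next I would let $\F$ denote the collection of all diffeologies on $X'$ containing $f\circ\p$. This collection is nonempty, since the coarse diffeology of $X'$ (all parametrisations) belongs to it; and $f\circ\p$ is a genuine, nonempty family because $\p$ contains at least the constant plots by the covering axiom. Setting $f_*(\p):=\bigcap_{\p'\in\F}\p'$, the closure property gives that $f_*(\p)$ is a diffeology; it contains $f\circ\p$ because each $\p'\in\F$ does, so $f_*(\p)\in\F$; and by construction $f_*(\p)\subseteq\p'$ for every $\p'\in\F$. Thus $f_*(\p)$ is the least element of $\F$, which is exactly the assertion.

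Should an explicit description be desired, one would check that $f_*(\p)$ equals the set of parametrisations $p\colon U\to X'$ such that each $u\in U$ has an open neighbourhood $V\subseteq U$ on which either $p$ is constant or $p|_V = f\circ q\circ h$ for some plot $q\colon W\to X$ in $\p$ and some smooth $h\colon V\to W$; this amounts to verifying that this set is a diffeology, contains $f\circ\p$, and lies in every member of $\F$, after which uniqueness of the least element identifies it with $\bigcap_{\p'\in\F}\p'$. None of this presents a real difficulty: the whole argument is formal. The only point requiring a little attention --- namely that the local-factorisation clause in the explicit description be compatible with the locality and smooth-compatibility axioms --- is bypassed entirely by the intersection construction, which is why I would present that one.
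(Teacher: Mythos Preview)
Your argument is correct. The paper does not actually prove this proposition: it is stated as a definition-proposition with a citation to \cite{Sou,Igdiff} and no proof is given in the text. Your intersection-of-diffeologies construction is exactly the standard proof one finds in the cited reference \cite{Igdiff}, and the explicit local-factorisation description you sketch at the end is likewise the one given there; so there is nothing to compare against in the paper itself, and your proposal fills the gap in the expected way.
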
  
	
	\begin{Definition}
		Let $(X,\p)$ and $(X',\p')$
		be two diffeological spaces.  A map
		$f : X \rightarrow X'$
		is  called  a
		subduction
		if  $\p' = f_*(\p).$ \end{Definition}
	.
	In particular, we have the following constructions.
	
	\begin{Definition}[Product Diffeology]\label{d:diffeol product}
		Let $\{(X_i,\p_i)\}_{i\in I}$ be a family of diffeological spaces.  Then the \textbf{product diffeology} $\p$ on $X=\prod_{i\in I}X_i$ contains a parametrisation $p\colon U\to X$ as a plot if for every $i\in I$, the map $\pi_i\circ p$ is in $\p_i$.  Here, $\pi_i$ is the canonical projection map $X\to X_i$. 
	\end{Definition}
	
	In other words, in last definition, $\p = \cap_{i \in I} \pi_i^*(\p_i)$ and each $\pi_i$ is a subduction. 
	
	\begin{Definition}[Subset Diffeology]\label{d:diffeol subset}
		Let $(X,\p)$ be a diffeological space, and let $Y\subseteq X$.  Then $Y$ comes equipped with the \textbf{subset diffeology}, which is the set of all plots in $\p$ with image in $Y$.
	\end{Definition}
	If $X$ is a smooth manifolds, finite or infinite dimensional, modelled on a complete locally convex topological vector space, we define the \textbf{nebulae diffeology}
	$$\p(X) = \left\{ p \in C^\infty(O,X) \hbox{ (in the usual sense) }| O \hbox{ is open in } \R^d, d \in \N^* \right\}.$$
	\subsection{Fr\"olicher spaces}
	\begin{Definition} $\bullet$ A \textbf{Fr\"olicher} space is a triple
		$(X,\F,\mcc)$ such that
		
		- $\mcc$ is a set of paths $\R\rightarrow X$,
		
		- A function $f:X\rightarrow\R$ is in $\F$ if and only if for any
		$c\in\mcc$, $f\circ c\in C^{\infty}(\R,\R)$;
		
		- A path $c:\R\rightarrow X$ is in $\mcc$ (i.e. is a \textbf{contour})
		if and only if for any $f\in\F$, $f\circ c\in C^{\infty}(\R,\R)$.
		
		\vskip 5pt $\bullet$ Let $(X,\F,\mcc)$ et $(X',\F',\mcc')$ be two
		Fr\"olicher spaces, a map $f:X\rightarrow X'$ is \textbf{differentiable}
		(=smooth) if and only if one of the following equivalent conditions is fulfilled:
		\begin{itemize}
			\item $\F'\circ f\circ\mcc\subset C^{\infty}(\R,\R)$
			\item $f \circ \C \subset \C'$
			\item $\F'\circ f \subset  \F$ 
		\end{itemize}
	\end{Definition}
	
	Any family of maps $\F_{g}$ from $X$ to $\R$ generate a Fr\"olicher
	structure $(X,\F,\mcc)$, setting \cite{KM}:
	
	- $\mcc=\{c:\R\rightarrow X\hbox{ such that }\F_{g}\circ c\subset C^{\infty}(\R,\R)\}$
	
	- $\F=\{f:X\rightarrow\R\hbox{ such that }f\circ\mcc\subset C^{\infty}(\R,\R)\}.$
	
	One easily see that $\F_{g}\subset\F$. This notion will be useful
	in the sequel to describe in a simple way a Fr\"olicher structure.
	A Fr\"olicher space carries a natural topology,
	which is the pull-back topology of $\R$ via $\F$. In the case of
	a finite dimensional differentiable manifold, the underlying topology
	of the Fr\"olicher structure is the same as the manifold topology. In
	the infinite dimensional case, these two topologies differ very often.
	
	Let us now compare Fr\"olicher spaces with diffeological spaces, with the following diffeology {{}$\p_\infty(\F)$} called "nebulae":
	{{}
		{Let }$O${ be an open subset of a Euclidian space; } $$\p_\infty(\F)_O=
		\coprod_{p\in\N}\{\, f : O \rightarrow X; \, \F \circ f \subset C^\infty(O,\R) \quad \hbox{(in
			the usual sense)}\}$$
		and 
		$$ \p_\infty(\F) = \bigcup_O \p_\infty(\F)_O,$$
		where the latter union is extended over all open sets $O \subset \R^n$ for $n \in \N^*.$ 
	}
	With this construction, we get a natural diffeology when
	$X$ is a Fr\"olicher space. In this case, one can easily show the following:
	\begin{Proposition} \label{Frodiff} \cite{Ma2006-3} 
		Let$(X,\F,\mcc)$
		and $(X',\F',\mcc')$ be two Fr\"olicher spaces. A map $f:X\rightarrow X'$
		is smooth in the sense of Fr\"olicher if and only if it is smooth for
		the underlying nebulae diffeologies. \end{Proposition}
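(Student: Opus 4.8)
The plan is to prove the two implications separately, in each case simply unwinding the definition of the nebulae diffeology $\p_\infty$ and invoking the equivalent characterizations of Fr\"olicher smoothness collected in the definition above. The one conceptual point I would isolate first is the observation that the contours of a Fr\"olicher space $(X,\F,\mcc)$ are exactly the one-dimensional plots of $\p_\infty(\F)$: a path $c\colon\R\to X$ lies in $\mcc$ iff $\F\circ c\subset C^\infty(\R,\R)$, and since $\R=\R^1$ is an open subset of a Euclidean space, this is precisely the condition for $c$ to be a plot of $\p_\infty(\F)$ with domain $\R$.

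For the direct implication I would assume $f$ Fr\"olicher smooth and use the equivalent form $\F'\circ f\subset\F$. Given an arbitrary plot $p\colon O\to X$ of $\p_\infty(\F)$ (so $O$ open in some $\R^n$ and $\F\circ p\subset C^\infty(O,\R)$), I would compute
\[
\F'\circ(f\circ p)=(\F'\circ f)\circ p\subset\F\circ p\subset C^\infty(O,\R),
\]
so $f\circ p$ is a plot of $\p_\infty(\F')$; as $p$ is arbitrary, $f$ is diffeologically smooth for the nebulae diffeologies.

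For the converse I would assume $f$ diffeologically smooth for $\p_\infty(\F)$ and $\p_\infty(\F')$, pick a contour $c\in\mcc$, note by the observation above that $c$ is a plot of $\p_\infty(\F)$, hence that $f\circ c$ is a plot of $\p_\infty(\F')$, i.e. $\F'\circ(f\circ c)\subset C^\infty(\R,\R)$; by the definition of $\mcc'$ this says $f\circ c\in\mcc'$. Thus $f\circ\mcc\subset\mcc'$, which is one of the equivalent conditions for $f$ to be smooth in the sense of Fr\"olicher.

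I do not expect a serious obstacle: the statement is essentially a bookkeeping compatibility whose whole content is that Fr\"olicher smoothness is already detected on one-dimensional contours while the nebulae diffeology tests arbitrary-dimensional parametrisations — the direct implication shows the extra parameters cost nothing, and the converse shows the one-dimensional plots already suffice. The only point where care is needed is to invoke the correct equivalent form of Fr\"olicher smoothness in each direction ($\F'\circ f\subset\F$ forward, $f\circ\mcc\subset\mcc'$ backward); the equivalence of these with the definitional condition $\F'\circ f\circ\mcc\subset C^\infty(\R,\R)$ is part of the definition of a morphism of Fr\"olicher spaces and may be taken as granted.
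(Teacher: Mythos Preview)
Your argument is correct and is the standard unwinding of the definitions. Note, however, that the paper does not supply its own proof of this proposition: it is stated with a citation to \cite{Ma2006-3} and left without a proof environment, so there is no in-paper argument to compare against. Your write-up would serve perfectly well as the omitted proof.
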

	
	Thus, we can also state intuitively:
	\vskip 12pt
	\begin{tabular}{ccccc}
		smooth manifold  & $\Rightarrow$  & Fr\"olicher space  & $\Rightarrow$  & Diffeological space\tabularnewline
	\end{tabular}
	\vskip 12pt
	With this construction, any complete locally convex topological vector space is a diffeological vector space, that is, a vector space for which addition and scalar multiplication is smooth. The same way, any finite or infinite dimensional manifold $X$ has a nebulae diffeology, which fully determines smooth functions from or with values in $X.$We now finish the comparison of the notions of diffeological and Fr\"olicher 
	space following mostly \cite{Ma2006-3,Wa}:
	
	\begin{Theorem} \label{compl-fro}
		Let $(X,\p)$ be a diffeological space. There exists a unique Fr\"olicher structure
		$(X, \F_\p, \mcc_\p)$ on $X$ such that for any Fr\"olicher structure $(X,\F,\mcc)$ on $X,$ these two equivalent conditions are fulfilled:
		
		(i)  the canonical inclusion is smooth in the sense of Fr\"olicher $(X, \F_\p, \mcc_\p) \rightarrow (X, \F, \mcc)$
		
		(ii) the canonical inclusion is smooth in the sense of diffeologies $(X,\p) \rightarrow (X, \p_\infty(\F)).$ 
		
		\noindent Moreover, $\F_\p$ is generated by the family 
		$$\F_0=\lbrace f : X \rightarrow \R \hbox{ smooth for the 
			usual diffeology of } \R \rbrace.$$
		{{} We call \textbf{Fr\"olicher completion} of $\p$ the Fr"olicher structure $(X, \F_\p, \mcc_\p).$}
	\end{Theorem}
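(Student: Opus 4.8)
The plan is to take $(X,\F_\p,\mcc_\p)$ to be the Fr\"olicher structure \emph{generated} by $\F_0$, and then to show that, for an arbitrary test Fr\"olicher structure $(X,\F,\mcc)$ on $X$, both (i) and (ii) are equivalent to the single inclusion $\F\subseteq\F_0$. Concretely, I would set as announced
$$\F_0=\set{f\colon X\to\R \mid f\circ p\in C^\infty(O_p,\R)\text{ for every }p\in\p},$$
and invoke the generation principle recalled before Proposition \ref{Frodiff}: $\F_0$ gives rise to a Fr\"olicher structure with contours $\mcc_\p=\set{c\colon\R\to X\mid \F_0\circ c\subset C^\infty(\R,\R)}$ and functions $\F_\p=\set{f\colon X\to\R\mid f\circ\mcc_\p\subset C^\infty(\R,\R)}$, with $\F_0\subseteq\F_\p$ automatically. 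This already produces the structure and, by construction, gives the ``moreover'' statement; what remains is to identify $\F_\p$ with $\F_0$ and to verify the universal property.

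For the inclusion $\F_\p\subseteq\F_0$, the key observation is that every plot $p\colon O\to X$ of $\p$ is a Fr\"olicher-smooth map from $O$ --- endowed with its standard Fr\"olicher structure, whose functions are $C^\infty(O,\R)$ and whose contours are $C^\infty(\R,O)$ (that this is a Fr\"olicher structure is where Boman's theorem enters) --- into $(X,\F_\p,\mcc_\p)$: indeed, for any smooth $\gamma\colon\R\to O$ and any $g\in\F_0$ one has $g\circ p\in C^\infty(O,\R)$ by diffeological smoothness of $g$, hence $g\circ p\circ\gamma\in C^\infty(\R,\R)$, i.e.\ $p\circ\gamma\in\mcc_\p$. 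Composing with an arbitrary $f\in\F_\p$, which by definition is Fr\"olicher-smooth from $(X,\F_\p,\mcc_\p)$ to $\R$, the map $f\circ p\colon O\to\R$ is Fr\"olicher-smooth, hence lies in $C^\infty(O,\R)$; as $p$ was arbitrary this gives $f\in\F_0$. I expect this step --- essentially the passage from ``smooth along all smooth curves'' to ``smooth'', i.e.\ Boman's theorem --- to be the only non-formal ingredient, and therefore the main obstacle; everything else is an unwinding of definitions.

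For the universal property, let $(X,\F,\mcc)$ be an arbitrary Fr\"olicher structure on $X$. By the third of the equivalent characterisations of Fr\"olicher smoothness, the canonical inclusion $(X,\F_\p,\mcc_\p)\to(X,\F,\mcc)$ is smooth if and only if $\F\subseteq\F_\p$, i.e.\ (by the previous paragraph) if and only if $\F\subseteq\F_0$. On the other hand, by the very definition of the nebulae diffeology, a parametrisation $p\colon O\to X$ belongs to $\p_\infty(\F)$ if and only if $\F\circ p\subset C^\infty(O,\R)$; hence the canonical inclusion $(X,\p)\to(X,\p_\infty(\F))$ is diffeologically smooth if and only if $\F\circ p\subset C^\infty$ for every $p\in\p$, which is again precisely $\F\subseteq\F_0$. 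Thus (i) and (ii) are equivalent.

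Finally, for uniqueness: if $(X,\F',\mcc')$ is another Fr\"olicher structure on $X$ satisfying the stated property, then applying that property with test structure $(X,\F',\mcc')$ and using that the identity map is trivially Fr\"olicher-smooth shows that (ii) holds for $\F'$, i.e.\ $\F'\subseteq\F_0$; applying it with test structure $(X,\F_\p,\mcc_\p)$, for which (ii) holds since $\F_\p=\F_0$, shows that the identity $(X,\F',\mcc')\to(X,\F_\p,\mcc_\p)$ is Fr\"olicher-smooth, i.e.\ $\F_\p\subseteq\F'$. Hence $\F'=\F_0=\F_\p$, and since the contours of a Fr\"olicher space are determined by its functions, $(X,\F',\mcc')=(X,\F_\p,\mcc_\p)$.
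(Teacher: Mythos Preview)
The paper does not supply its own proof of this theorem; it is stated as a known result (with attribution to \cite{Ma2006-3,Wa}) and followed immediately by a remark invoking Boman's theorem. Your argument is correct and is precisely the natural one: generate the Fr\"olicher structure from $\F_0$, use Boman's theorem to show that in fact $\F_\p=\F_0$ (the only substantive step), and then reduce both (i) and (ii) to the single inclusion $\F\subseteq\F_0$. Your proof actually establishes slightly more than the theorem asserts, namely the equality $\F_\p=\F_0$ rather than merely that $\F_\p$ is generated by $\F_0$; this strengthening is what makes the equivalence (i)$\Leftrightarrow$(ii) transparent, and it is consistent with the paper's own emphasis on Boman's theorem in the remark that follows.
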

	

	
	
	A deeper analysis of these implications has been given
	in \cite{Wa}. The next remark is inspired on this work; it is based on \cite[p.26, Boman's theorem]{KM}.
	\begin{rem}
		We notice that the set of contours $\C$ of the Fr\"olicher space
		$(X,\F,\C)$ \textbf{does not} give us a diffeology, because a diffelogy
		needs to be stable under restriction of domains. In the case of paths in
		$\C$ the domain is always $\R.$ However, $\C$ defines a ``minimal diffeology''
		$\p_1(\F)$ whose plots are smooth parameterizations which are locally of the
		type $c \circ g,$ where $g \in \p_\infty(\R)$  and $c \in \C.$ Within this setting,
		a  map $f : (X,\F,\C) \rightarrow (X',\F',\C')$ is smooth if and only if it is smooth  $(X,\p_\infty(\F)) \rightarrow (X',\p_\infty(\F')) $ or equivalently smooth  .$(X,\p_1(\F)) \rightarrow (X',\p_1(\F')) $ 
	\end{rem}
	We apply the results on product diffeologies to the case of Fr\"olicher spaces and we derive very easily, (compare with e.g. \cite{KM}) the following:
	
	\begin{Proposition} \label{prod2} Let $(X,\F,\C)$
		and $(X',\F',\C')$ be two Fr\"olicher spaces equipped with their natural
		diffeologies $\p$ and $\p'$ . There is a natural structure of Fr\"olicher space
		on $X\times X'$ which contours $\C\times\C'$ are the 1-plots of
		$\p\times\p'$. \end{Proposition}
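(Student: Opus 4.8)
The plan is to build the Fr\"olicher structure on $X \times X'$ directly from the product functional family and then verify that its contour set is exactly the set of $1$-plots of $\p \times \p'$. First I would set $\F_{g} = \{ f \circ \pi_X \mid f \in \F\} \cup \{ f' \circ \pi_{X'} \mid f' \in \F'\}$, the family of real-valued functions on $X \times X'$ obtained by pulling back functionals along the two canonical projections, and let $(X \times X', \F'', \C'')$ be the Fr\"olicher structure it generates, as recalled after the definition of Fr\"olicher spaces (the construction attributed to \cite{KM}). By that construction a path $c = (c_1, c_2) \colon \R \to X \times X'$ is a contour iff $\F_g \circ c \subset C^\infty(\R,\R)$, which unwinds to: $f \circ c_1 \in C^\infty(\R,\R)$ for all $f \in \F$ and $f' \circ c_2 \in C^\infty(\R,\R)$ for all $f' \in \F'$; that is, $c_1 \in \C$ and $c_2 \in \C'$. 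Hence $\C'' = \C \times \C'$ as sets of paths, which is the first half of the statement.

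Next I would identify $\C'' $ with the $1$-plots of $\p \times \p'$. By Definition~\ref{d:diffeol product}, a parametrisation $p = (p_1,p_2) \colon \R \to X \times X'$ lies in $\p \times \p'$ iff $p_1 \in \p$ and $p_2 \in \p'$; restricting to domain $\R$, this says $p$ is a $1$-plot of $\p \times \p'$ iff $p_1$ is a $1$-plot of $\p$ and $p_2$ is a $1$-plot of $\p'$. Now $\p = \p_\infty(\F)$ and $\p' = \p_\infty(\F')$ are the nebulae diffeologies of the given Fr\"olicher structures, so I invoke the characterisation of $1$-plots of a nebulae diffeology: a path $c \colon \R \to X$ is a $1$-plot of $\p_\infty(\F)$ iff $\F \circ c \subset C^\infty(\R,\R)$, i.e.\ iff $c \in \C$ (and symmetrically for $X'$). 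Combining, the $1$-plots of $\p \times \p'$ are exactly the pairs $(c_1,c_2)$ with $c_1 \in \C$, $c_2 \in \C'$, which is precisely $\C \times \C' = \C''$.

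Finally I would note that the topology is automatic: once $\F''$ is generated by $\F_g$ and $\C''$ is determined as above, $(X \times X', \F'', \C'')$ is a bona fide Fr\"olicher space by the generation procedure, and the ``natural diffeology'' of this Fr\"olicher space is $\p_\infty(\F'')$; a short check using Proposition~\ref{Frodiff} and the product characterisation shows $\p_\infty(\F'') = \p \times \p'$, so the Fr\"olicher structure is genuinely compatible with the product diffeology and not merely agreeing on $1$-plots. The main obstacle — really the only subtle point — is the identification of $1$-plots of the nebulae diffeology $\p_\infty(\F)$ with contours of $\F$: one direction is immediate from the definition of $\p_\infty(\F)$, but the converse (that a path along which every $f \in \F$ pulls back smoothly is already a smooth parametrisation into the nebulae) rests on Boman's theorem, exactly as flagged in the Remark preceding this Proposition, and I would cite \cite{KM} and that Remark rather than reprove it. Everything else is a routine unwinding of the product-diffeology definition and the generation of Fr\"olicher structures from a functional family.
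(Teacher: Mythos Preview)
Your argument is correct and matches the paper's approach---indeed the paper omits the proof entirely, merely remarking that it follows easily from the product diffeology and the Fr\"olicher generation procedure (with a pointer to \cite{KM}). One minor clarification: the identification of contours with $1$-plots of $\p_\infty(\F)$ is immediate from the very definition of the nebulae diffeology (a map $c:\R\to X$ lies in $\p_\infty(\F)$ iff $\F\circ c\subset C^\infty(\R,\R)$, which is exactly $c\in\C$), so Boman's theorem is not needed for the proposition as stated; it only enters for your supplementary claim $\p_\infty(\F'') = \p \times \p'$ at the level of higher-dimensional plots, which goes beyond what the proposition asserts.
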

	
	We can even state the result above for the case of infinite products;
	we simply take cartesian products of the plots or of the contours.
	We also remark that given an algebraic structure, we can define a
	corresponding compatible diffeological structure. For example, a
	$\R-$vector space equipped with a diffeology is called a
	diffeological vector space if addition and scalar multiplication
	are smooth (with respect to the canonical diffeology on $\R$), see \cite{Igdiff,pervova,pervova2}. An
	analogous definition holds for Fr\"olicher vector spaces. Other
	examples will arise in the rest of the text.
	
	\begin{rem} \label{comp}
		Fr\"olicher, $c^\infty$ and Gateaux smoothness are the same notion
		if we restrict to a Fr\'echet context, see \cite[Theorem 4.11]{KM}.
		Indeed, for a smooth map $f : (F, \p_1(F)) \rightarrow \R$ defined
		on a Fr\'echet space with its 1-dimensional diffeology, we have
		that $\forall (x,h) \in F^2,$ the map $t \mapsto f(x + th)$ is
		smooth as a classical map in $\C^\infty(\R,\R).$ And hence, it is
		Gateaux smooth. The converse is obvious.
	\end{rem}
	
	\subsection{Quotient and subsets}
	
	We give here only the results that will be used in the sequel.

	We have now the tools needed to describe the diffeology on a quotient:
	
	\begin{Proposition} \label{quotient} let $(X,\p)$ b a diffeological
		space and $\rel$ an equivalence relation on $X$. Then, there is
		a natural diffeology on $X/\rel$, {{} denote}d by $\p/\rel$, defined as
		the push-forward diffeology on $X/\rel$ by the quotient projection
		$X\rightarrow X/\rel$. \end{Proposition}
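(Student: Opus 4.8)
The plan is to obtain $\p/\rel$ as a direct instance of the push-forward construction recalled above, applied to the canonical surjection $\pi \colon X \to X/\rel$; so nothing genuinely new is needed beyond checking that the push-forward exists in this situation and unwinding what its plots are. Thus I would set $\p/\rel := \pi_*(\p)$ and split the work into: (a) showing the push-forward diffeology on $X/\rel$ is well defined, and (b) recording the explicit description of its plots that will be used in the sequel.

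For (a), I would observe that the collection of diffeologies on $X/\rel$ containing $\pi \circ \p$ is nonempty, since the diffeology consisting of \emph{all} parametrisations of $X/\rel$ trivially contains $\pi \circ \p$; and that an arbitrary intersection of diffeologies on a fixed set is again a diffeology, because each of the three axioms of Definition \ref{d:diffeology} (Covering, Locality, Smooth Compatibility) is preserved under intersection (constant parametrisations lie in every member, and the other two clauses are universally quantified conditions that pass to intersections). Hence $\pi_*(\p) = \bigcap \{ \D \text{ diffeology on } X/\rel \, : \, \pi \circ \p \subseteq \D \}$ is the smallest (for inclusion) diffeology containing $\pi \circ \p$, as demanded by the definition of the push-forward.

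For (b), I would introduce the set $\D_0$ of parametrisations $p \colon U \to X/\rel$ such that every $u \in U$ admits an open neighbourhood $V \subseteq U$ on which $p|_V = \pi \circ q$ for some plot $q \in \p$ with domain $V$, and check directly that $\D_0$ is a diffeology: Covering holds since a constant parametrisation into $\{[x]\}$ equals $\pi$ composed with a constant plot of $\p$; Locality is built into the definition; and Smooth Compatibility follows by precomposing the local lifts $q$ with the given smooth map and invoking Smooth Compatibility in $\p$. Since $\D_0 \supseteq \pi \circ \p$ (take $V = U$), one gets $\pi_*(\p) \subseteq \D_0$; conversely, any diffeology $\D \supseteq \pi \circ \p$ contains $\D_0$, because for $p \in \D_0$ each restriction $p|_V = \pi \circ q$ lies in $\pi \circ \p \subseteq \D$, so $p \in \D$ by Locality of $\D$. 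Therefore $\D_0 = \pi_*(\p) = \p/\rel$.

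I do not expect a real obstacle here. The only point needing a little care is that $\pi \circ \p$ by itself is generally \emph{not} a diffeology (it typically fails the Locality axiom), so one must genuinely pass to the generated diffeology; the mild work is precisely the verification that the explicit ``locally liftable through $\pi$'' family $\D_0$ is a diffeology and coincides with that generated diffeology. Surjectivity of $\pi$ plays no role in the existence of $\p/\rel$, though it is implicit in the way this quotient diffeology is used later.
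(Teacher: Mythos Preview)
Your argument is correct. Note, however, that the paper does not supply a proof of this proposition at all: it is stated as an immediate instance of the push-forward construction quoted just before (from \cite{Sou,Igdiff}), and the existence of $f_*(\p)$ is taken as given by those references. What you have written is therefore strictly more than the paper provides: you actually verify that the push-forward diffeology exists (via the intersection-of-diffeologies argument) and then identify it with the explicit ``locally liftable through $\pi$'' family $\D_0$. Both steps are sound and standard; the second description is indeed the one used in practice. So your approach is not so much different from the paper's as it is a fleshing-out of a result the paper simply cites.
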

	
	Given a subset $X_{0}\subset X$, where $X$ is a Fr\"olicher space
	or a diffeological space, we can define on subset structure on $X_{0}$,
	induced by $X$.
	
	$\bullet$ If $X$ is equipped with a diffeology $\p$, we can define
	a diffeology $\p_{0}$ on $X_{0},$ called \textbf{subset
		diffeology} \cite{Sou,Igdiff} setting \[ \p_{0}=\lbrace p\in\p
	\hbox{ such that the image of }p\hbox{ is a subset of
	}X_{0}\rbrace.\]

	$\bullet$ If $(X,\F,\C)$ is a Fr\"olicher space, we take as a generating
	set of maps $\F_{g}$ on $X_{0}$ the restrictions of the maps $f\in\F$.
	In that case, the contours (resp. the induced diffeology) on $X_{0}$
	are the contours (resp. the plots) on $X$ which image is a subset
	of $X_{0}$.
	\begin{example}
		Let $X$ be a diffeological space. Let us {{} denote} by $$X^\infty = \left\{ (x_n)_{n \in \N} \in X^\N \, | \, \{n \, | x_n \neq 0 \} \hbox{ is a finite set}\right\} $$ Then this is a diffeological space, as a subset of $X^\N.$ The quotient space $X^\N / X^\infty$ is a diffeological space related to the settings of non standard analysis, see e.g. \cite{AFHKL1986}.
	\end{example}
	\subsection{Vector pseudo-bundles}

	{{} Let us now have a precise look at the notion of fiber bundle in classical (finite dimensional) fiber bundles. Fiber bundles, in the context of smooth finite dimensional manifolds, are defined by \begin{itemize}
			\item a smooth manifold  $E$ called total space
			\item a smooth manifold  $X$ called base space
			\item a smooth submersion $\pi: E \rightarrow X$ called fiber bundle projection
			\item a smooth manifold $F$ called typical fiber, because $\forall x \in X,  \pi^{-1}(x)$ is a smooth submanifold of $E$ diffeomorphic to $F.$
			\item a smooth atlas on $X,$ with domains $U \subset X$ such that $\pi^{-1}(U)$ is an open submanifold of $E$ diffeomorphic to $U \times F.$ We the get a system of local trivializations of the fiber bundle.
		\end{itemize}
		By the way, in order to be complete, a smooth fiber bundle should be the quadruple data $(E,X,F,\pi)$ (because the definition of $\pi$ and of $X$ enables to find systems of local trivializations). For short, this quadruple setting is often {{} denote}d by the projection map $\pi: E\rightarrow X.$
		
		There exists some diffeological spaces which carry no atlas, so that, the condition of having a system of smooth trivializations in a generalization of the notion of fiber bundles is not a priori necessary, even if this condition, which is additional, enables interesting technical aspects \cite[pages 194-195]{MW2017}. So that, in a general setting, we do not need to assume the existence of local trivializations.}
	Now, following \cite{pervova}, in which the ideas from \cite[last section]{Sou} have been devoloped to vector spaces, the notion of quantum structure has been introduced in \cite{Sou} as a generalization of principal bundles, and the notion of vector pseudo-bundle in \cite{pervova}.The common idea consist in the description of fibered objects made of a total (diffeological) space $E,$ over a diffeological space $X$ and with a canonical smooth {{} bundle} projection $\pi: E \rightarrow X$ such as, $\forall x \in X,$ $\pi^{-1}(x)$ is endowed with a (smooth) algebraic structure, but for which we do not assume the existence of a system of local trivialization. 
	\begin{enumerate}
		\item For a diffeological vector pseudo-bundle, the fibers $\pi^{-1}(x)$ are assumed diffeological vector spaces, i.e. vector spaces where addition and multiplication over a diffeological field of scalars (e.g. $\R$ or $\mathbb{C}$) is smooth. We notice that \cite{pervova} only deals with finite dimensional vector spaces.
		\item For a so-called ``structure quantique'' (i.e. ``quantum structure'') following the terminology of \cite{Sou}, a diffeological group $G$ is acting on the right, smoothly and freely on a diffeological space $E$. The space of orbits $X=E/G$ defines the base of the quantum structure $\pi: E \rightarrow X,$ which generalize the notion of principal bundle by not assuming the existence of local trivialization. In this picture, each fiber $\pi^{-1}(x)$ is isomorphic to $G.$
	\end{enumerate}
	From these two examples, we can generalize the picture. 
	\begin{Definition}\label{pseu-fib}
		Let $E$ and $X$ be two diffeological spaces and let $\pi:E\rightarrow X$ be a smooth surjective map. Then $(E,\pi,X)$ is a \textbf{diffeological fiber pseudo-bundle} if and only if  $\pi$ is a subduction. 
	\end{Definition}
	{{} Let us precise that we do not assume that there exists a typical fiber, in coherence with Pervova's diffeological vector pseudo-bundles. We
	}
	can give the following definitions along the lines of \cite{Ma2020-3}:
	\begin{Definition}
		Let $\pi:E\rightarrow X$ be a diffeological fiber pseudo-bundle. Then:
		\begin{enumerate}
			\item Let $\mathbb{K}$ be a diffeological field. $\pi:E\rightarrow X$ is a \textbf{diffeological $\mathbb{K}-$vector pseudo-bundle} if there exists: 
			\begin{itemize}
				\item a smooth fiberwise map $.\, :\mathbb{K} \times E \rightarrow E,$
				\item a smooth fiberwise map $+:E^{(2)} \rightarrow E$ where $$E^{(2)} = \coprod_{x \in X} \{(u,v) \in E^2\, | \, (u,v)\in \pi^{-1}(x)\}$$ equipped by the pull-back diffeology of the canonical map $E^{(2)} \rightarrow E^2,$
			\end{itemize}
			such that $\forall x \in X, $ $(\pi^{-1}(x),+,.)$ is a diffeological $\mathbb{K}-$vector bundle. We say that $E$ is a \textbf{ diffeological vector bundle} is $E$ is a diffeological vector pseudo-bundle which fibers are all isomorphic as diffeological vector spaces. 
			\item $\pi:E\rightarrow X$ is a \textbf{Souriau quantum structure} if it is a diffeological principal pseudo-bundle with diffeological gauge (pseudo-)bundle $X\times G \rightarrow X.$ 
		\end{enumerate}
	\end{Definition}
	\subsection{Tangent and cotangent spaces}
	The review \cite{GMW2023} actually identifies five main definitions of tangent space, that generalize the classical tangent space of a finite dimensional manifold from one among the two following starting points
	\begin{itemize}
		\item The tangent space of a finite dimensional manifold is defined by 1-jets of paths.
		\item  The tangent space of a finite dimensional manifold is defined by pointwise derivations of $\R-$valued maps. 
		\end{itemize}
	When replacing ``finite dimensional manifold'' by ``diffeological space'', these two notions do not coincide. Even more, unlike in the $c^\infty-$setting \cite{KM}, the natural mapping $$`` \hbox{1-jets} \quad \longrightarrow \quad \hbox{pointwise derivations}''$$ may not be injective, and the space of 1-jets at one point may be only a cone, and not a vector space. For this reason, five generalizations are actually studied for different ways of application. 
	
	 The \textbf{internal tangent cone} extends straightway the definition of the tangent space of a smooth manifold by germs of paths (compare with the kinematic tangent space in \cite{KM}). This defines the internal tangent cone first described for Fr\"olicher spaces in \cite{DN2007-1}. 
	 This approach seems sufficient for application purpose \cite{GW2021}.  
		 
		But there are other tangent spaces in the category of diffeological spaces: the diff-tangent space which is defined in \cite{Ma2020-3} and that will be described in next parts of the exposition,
		 the \textbf{internal tangent space} is defined in \cite{He1995,CW2014}, based on germs of paths. {{} This second definition is necessary, and the internal tangent space differ from the internal tangent cone. Indeed, spaces of germs do not carry intrinsically a structure of abelian group. This remark was first formulated in the context of Fr\"olicher spaces, see \cite{DN2007-1}, and see \cite{CW2014} for the generalization to diffeologies. For this reason, one can complete the tangent cone into a vector space, called internal tangent space. This was performed in \cite{CW2014} via mild considerations on colimits in categories. From another viewpoint, the \textbf{external tangent space} is spanned by derivations, and one can define the cone of derivations which are defined by germs of paths \cite{Ma2013} as well as its vector space completion in the space of derivations following \cite{GW2020}.  
	For finite dimensional {{}manifolds} these tangent spaces coincide. For an extended presentation, we refer to the review \cite{GMW2023}.
	
	The definition of the cotangent space $T^*X,$ as well as the definition of the algebra of differential forms $\Omega(X,\K)$ does not carry so many ambiguities and is defined through pull-back on each domain of plots, see \cite{Igdiff} and e.g. \cite{Ma2013} for a comprehensive exposition. 
	\subsection{On examples of interest}
	We give here a non-exhaustive list of examples where one can exhibit structures of diffeological vector bundles, and also diffeological vector pseudo-bundles, that we hope motivating for the reader. Two other examples are developed more extensively at the end of the paper.
\begin{example}
	Le $\tau$ be a triangulation of a smooth paracompact manifold $N.$ The question of the diffeology associated to a triangulation is a long story which starts by Ntumba's PhD thesis, which principal results are published in \cite{Nt2002}, and which continues with e.g. \cite{CW2014-2,Ma2016-2,Ma2020-3}. In any of these works, where slightly different diffeologies are defined for a same triangulation, this is a simple exercise to check that $\Omega^*$ is only a vector pseudo-bundle. A toy example remains again on $KerP,$ with $P(X,Y) = XY$.  
\end{example}
	\begin{example}
		Let $E = \R^\N$ and let $F=C_0(\N,\R)$ be the set of sequences that converge to $0.$ We equip these spaces with the topology of uniform convergence. It is well-known that $F$ has no topological complement in $E,$ and that $F$ acts on $E$ by translation. Then the quotient map $E \rightarrow E/F$ defines a diffeological vector bundle with typical fiber $F.$ The existence of local slices for the $D-$topology of $E/F$ is actually unknown, while this setting furnishes a classical counter-example in the setting of topological vector spaces.
	\end{example}
\begin{example}
 Following \cite{Ma2020-3}, let $X$ be a topological vector space and let $Y$ be a Fr\'echet space which is the completion of $X.$ Let $F: X \rightarrow \R$ be a functional on $X$ and let $C(X,Y)$ be the vector space of sequences in $X$ that converge in $Y. $ Then we define the $Y-$weak solutions of the functional equation $$F(x) = 0$$  as the set $$C_Y = \left\{\lim u_n \, | \, (u_n)\in C(X,Y) \hbox{ and } \lim F(u_n)=0\right\}.$$
 The space $C(X,Y)$ has a dedicated diffeology called Cauchy diffeology which ensures smoothness of the limit. Therefore, the limit map $$lim : C_Y \rightarrow Y$$ defines a diffeological pseudo-fiber bundle with total space $C_Y,$ with base $S_Y = lim C_Y$ and with fibers which are all diffeomorphic to diffeological subspaces of $C_0(X,Y),$ the set $X-$sequences that converge to $0$ in $Y.$  Under mild conditions on the functional $F,$ the fibers can be proved to be vector spaces.
\end{example}

\begin{example}
	Let $N$ be a finite dimensional Riemannian manifold. From \cite{Ee}, the spaces of maps $H^s(S^1,N)$ are Hilbert manifolds when $s>1/2.$ Following \cite{Ma2015-2}, $H^s(S^1,N)$ is a diffeological space for $s \leq 1/2,$ with $C^0-$homotopy type which may depend on the value of $s$ (compare also with \cite{Mir2017}). Therefore, the study of their tangent and cotangent space envolve diffeological vector bundles which actually have no known local trivialization. These objects are intersting in optimization theory as highlighted in \cite{We2017}.
\end{example}
\section{Technical constructions: $G-$diffeology and isomorphisms of vector pseudo-bundles}
For this section, we need to recall the following definitions from \cite{Igdiff}: \begin{Definition}

	Let $(X,\p)$ and $(X',\p')$ be two diffeological spaces. Let $S \subset C^\infty(X,X')$ be a set of smooth maps. The \textbf{{
			functional} diffeology} on $S$ is the diffeology $\p_S$
	made of plots
	$$ \rho : D(\rho) \subset \R^k \rightarrow S$$
	such that, 
	for each $p \in \p, $
	the maps $\Phi_{\rho, p}: (x,y) \in D(p)\times D(\rho) \mapsto \rho(y)(x) \in X'$ are plots of $\p'.$
\end{Definition}

\noindent
With this definition, we have the classical fundamental property for calculus of variations and for composition:

\begin{Proposition} \label{functf}\cite{Igdiff}
	Let $X,Y,Z$ be diffeological spaces
	\begin{enumerate}
		\item
		$$C^\infty(X\times Y,Z) = C^\infty(X,C^\infty(Y,Z)) = C^\infty(Y,C^\infty(X,Z))$$as diffeological spaces equipped with {
			functional} diffeologies.
		\item The composition map 
		$$C^\infty(X,Y) \times C^\infty(Y,Z) \rightarrow C^\infty(X,Z)$$ is smooth.  
	\end{enumerate}
	
\end{Proposition}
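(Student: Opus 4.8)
The plan is to prove (1) first as a bijection of sets and then upgrade it to an equality of diffeological spaces, and to deduce (2) from (1) by uncurrying. Throughout I would lean on two elementary facts recalled in the preliminaries: by Definition \ref{d:diffeol product} a parametrisation $r$ into a product $A\times B$ is a plot exactly when $\pi_A\circ r$ and $\pi_B\circ r$ are plots; and, unwinding the functional diffeology, a parametrisation $\rho$ into $C^\infty(A,B)$ is a plot precisely when $(t,u)\mapsto\rho(t)(p(u))$ is a plot of $B$ for every plot $p$ of $A$. Smooth Compatibility applied to the diagonal $w\mapsto(w,w)$ will be used repeatedly to collapse an auxiliary parameter.

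For the bijection underlying (1): given $f\in C^\infty(X\times Y,Z)$, set $\hat f(x)(y)=f(x,y)$. First I would check $\hat f(x)\in C^\infty(Y,Z)$ by composing $y\mapsto f(x,y)$ with the constant plot at $x$ paired with an arbitrary plot of $Y$. Then I would check $\hat f\colon X\to C^\infty(Y,Z)$ is smooth: for plots $p$ of $X$ and $q$ of $Y$, the map $(u,v)\mapsto f(p(u),q(v))$ is $f$ composed with the product plot $(p,q)$, hence a plot of $Z$, which is exactly the condition for $\hat f\circ p$ to be a plot of the functional diffeology. Conversely, given $g\in C^\infty(X,C^\infty(Y,Z))$, set $f(x,y)=g(x)(y)$; for a plot $r=(p,q)$ of $X\times Y$ on a domain $W$, the hypothesis that $g\circ p$ is a plot forces $(w,w')\mapsto g(p(w))(q(w'))$ to be a plot of $Z$ on $W\times W$, and precomposing with the diagonal gives that $w\mapsto f(r(w))$ is a plot, so $f$ is smooth. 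These constructions are mutually inverse, and symmetry in $X$ and $Y$ gives the third space.

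To promote this to an equality of diffeological spaces I would unwind both functional diffeologies on a parametrisation $\rho$. On the $C^\infty(X\times Y,Z)$ side the plot condition reads: for every pair of plots $p,q$ of $X,Y$ on a \emph{common} domain $W$, the map $(t,w)\mapsto\rho(t)(p(w),q(w))$ is a plot of $Z$. On the $C^\infty(X,C^\infty(Y,Z))$ side it reads: for every plot $p\colon U\to X$ and every plot $q\colon V\to Y$, the map $(t,u,v)\mapsto\rho(t)(p(u),q(v))$ is a plot of $Z$ on $D(\rho)\times U\times V$. The second implies the first by taking $U=V=W$ and composing with the diagonal; for the converse I would feed $p\circ\pi_U$ and $q\circ\pi_V$, which are plots on the common domain $U\times V$, into the first clause, recovering exactly the second. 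This is the only genuinely delicate point: one must check that the functional diffeology does not see whether the test plots of $X$ and $Y$ share a domain, and the passage through $p\circ\pi_U$, $q\circ\pi_V$ is what makes it work. The same bookkeeping handles the third space.

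Finally, for (2): by the exponential law just established, the composition map $C^\infty(X,Y)\times C^\infty(Y,Z)\to C^\infty(X,Z)$ is smooth if and only if its uncurried form $C^\infty(X,Y)\times C^\infty(Y,Z)\times X\to Z$, $(f,g,x)\mapsto g(f(x))$, is smooth. A plot of the source necessarily has the shape $(\phi,\psi,p)$ with $\phi,\psi,p$ plots on a common domain $W$. I would first use that $\phi$ is a plot of $C^\infty(X,Y)$ together with $p$ to obtain, after composing with the diagonal, that $q\colon w\mapsto\phi(w)(p(w))$ is a plot of $Y$; then use that $\psi$ is a plot of $C^\infty(Y,Z)$ together with $q$ to obtain, again via the diagonal, that $w\mapsto\psi(w)(q(w))=\psi(w)(\phi(w)(p(w)))$ is a plot of $Z$. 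That is precisely smoothness of the uncurried map, hence of composition.
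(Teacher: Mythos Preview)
Your argument is correct. The currying/uncurrying bijection, the promotion to an equality of diffeologies via the projection trick $p\circ\pi_U$, $q\circ\pi_V$ together with the diagonal, and the derivation of (2) by uncurrying and two applications of the diagonal are all sound and are exactly the standard way this is done.

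There is, however, nothing to compare against in this paper: the proposition is stated with the citation \cite{Igdiff} and is not proved here; it is quoted as a known fact from Iglesias-Zemmour's monograph. Your write-up is in fact the standard proof one finds in that reference (see \cite[1.57--1.60]{Igdiff}), so in that sense your approach coincides with the intended one. One tiny cosmetic point: when you invoke ``Smooth Compatibility applied to the diagonal'', what you are really using is that the diagonal $W\to W\times W$ is a smooth map between Euclidean open sets, so precomposition with it keeps plots plots; this is indeed axiom~(3) of Definition~\ref{d:diffeology}, as you say.
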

\subsection{The $G-$diffeology and its tangent space}
\begin{Definition} \label{pG}
	Let $G$ be a diffeological group, and let $\rho: G \times X \rightarrow X$ be an action on a set $X.$ Then we note by $\p_\rho(X),$ or $\p_G(X)$ when there is no ambiguity, the push-forward diffeology from $G$ on $X.$ 
\end{Definition}
This diffeology separates the orbits of $G$ in $X$, that is, two disjoint orbits of $G$ lie in different connected components of $X.$ Therefore, we can define:
\begin{Definition} \label{TG}
	\begin{itemize}
		\item the Fr\"olicher completion $(X,\F_\rho,\mcc_\rho)$ of $(X,\p_\rho)$
		\item $\Omega^*_\rho(X)$ the de Rham complex of $(X,\F_\rho,\mcc_\rho)$
		\item ${}^\rho TX$ the internal tangent cone of $(X,\p_\rho)$ 
	\end{itemize}
\end{Definition} 

\begin{Proposition}
	${}^\rho TX$ is a diffeological vector pseudo-bundle. 
\end{Proposition}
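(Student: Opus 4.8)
The plan is to exploit, as suggested by the discussion preceding the statement, the fact that $\p_\rho$ separates the $G$-orbits in $X$. Concretely, a plot of $\p_\rho$ is locally of the form $u \mapsto \rho(g(u),y)$ for some plot $g$ of $G$ and some $y \in X$, so its image lies in the single orbit $G\cdot y$; hence every orbit is $D$-clopen and $(X,\p_\rho)$ is, as a diffeological space, the disjoint sum $\coprod_{O}(O,\p_\rho|_O)$ of its orbits $O = G\cdot x_0$. Since the internal tangent cone is a local construction over the base and commutes with disjoint sums, ${}^\rho TX = \coprod_O {}^\rho TX|_O$, so it suffices to prove that for each orbit $O$ the restriction ${}^\rho TX|_O \to O$ is a diffeological vector bundle in the sense of the definition following Definition~\ref{pseu-fib}; a disjoint sum of such is then, by that same definition, a diffeological vector pseudo-bundle over $(X,\p_\rho)$.

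First I would check that $\pi\colon {}^\rho TX \to X$ is a subduction, i.e. that $(X,\p_\rho)$ is the pushforward of the tangent-cone diffeology: the zero section $0_X\colon x \mapsto [\text{germ of the constant path } x]$ is smooth and satisfies $\pi\circ 0_X = \mathrm{id}_X$, so every plot $q$ of $\p_\rho$ factors as $\pi\circ(0_X\circ q)$ with $0_X\circ q$ a plot of ${}^\rho TX$; the same argument applies over each orbit.

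Next, fix $x\in X$ and describe the fibre ${}^\rho T_xX$. If $c$ is a $1$-plot of $\p_\rho$ with $c(0)=x$, then near $0$ one has $c(t)=\rho(g(t),y)$ with $g$ a plot of $G$ and $\rho(g(0),y)=x$; replacing $g$ by $t\mapsto g(t)g(0)^{-1}$, which is again a plot of $G$ since the group operations of $G$ are smooth, we may take $y=x$ and $g(0)=e$. Thus every germ in ${}^\rho T_xX$ is represented by a path $t\mapsto\rho(g(t),x)$ with $g$ a $1$-plot of $G$ through $e$, so the orbit map $\rho(\cdot,x)\colon G\to X$ induces a surjection from the internal tangent cone of $G$ at $e$ onto ${}^\rho T_xX$. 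Now the $G$-diffeology transports the multiplication and the inversion of $G$: for $1$-plots $g_1,g_2$ of $G$ through $e$, the pointwise product $g_1g_2$ and the inverse $g_1^{-1}$ are again $1$-plots through $e$, and the familiar Lie-theoretic computations (valid here because they use only smoothness of the group operations and the chain rule at the level of $1$-plots) show that the class of $t\mapsto\rho(g_1(t)g_2(t),x)$ depends only on the classes of the $t\mapsto\rho(g_i(t),x)$ and defines an addition, that reparametrisation $t\mapsto g(\lambda t)$ together with inversion produces a well-defined $\R$-scaling, and that these operations make $({}^\rho T_xX,+,\cdot)$ a diffeological $\R$-vector space; smoothness of $+$ and $\cdot$ for the tangent-cone diffeology is read off from the pushforward and functional diffeologies via Proposition~\ref{functf}.

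Finally, over a fixed orbit $O=G\cdot x_0$ the action $\rho$ lifts to a smooth action of $G$ on ${}^\rho TX|_O$ covering the transitive action on $O$, and each $g\in G$ acts as a diffeological-vector-space isomorphism ${}^\rho T_xX\to{}^\rho T_{\rho(g,x)}X$; hence all fibres over $O$ are mutually isomorphic and ${}^\rho TX|_O\to O$ is a diffeological vector bundle with typical fibre ${}^\rho T_{x_0}X$ (in general without any local trivialisation). Taking the disjoint sum over all orbits yields the claim. The step I expect to be the main obstacle is the one in the previous paragraph: the internal tangent cone of an arbitrary diffeological space is a priori only a cone, and it is precisely the group structure encoded in $\p_\rho$ that upgrades each fibre to a genuine vector space; the verifications that the transported operations descend to germ-classes, are jointly smooth for the tangent-cone diffeology, and satisfy the vector-space axioms fibrewise are routine-looking but constitute the real content of the proof.
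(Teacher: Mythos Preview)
Your argument is correct and follows essentially the same route as the paper: the key step in both is to lift germs of paths at $x$ to germs of paths in $G$ through $e$ via the orbit map, and then to define addition on ${}^\rho T_xX$ by $X_1+X_2=\partial_t\bigl(\rho(g_1(t)g_2(t),x)\bigr)\big|_{t=0}$, invoking the standard Lie-theoretic verification for diffeological groups (\cite{Les,MR2016}) that this yields a well-defined diffeological vector-space structure on each fibre. Your version is simply more explicit---you also spell out the subduction property via the zero section and the orbit decomposition, and observe the slightly stronger fact that over each orbit one actually gets a typical fibre---whereas the paper takes the pseudo-bundle structure of the internal tangent cone for granted and only sketches the vector-space upgrade of the fibres.
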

\begin{proof}
	As ${}^\rho TX$ an internal tangent cone, we only have to prove that each fiber of ${}^\rho TX$ over $X$ is a (diffeological) vector space. Let $x \in X$ and let $(c_1,c_2) \in C^\infty(\R,X)^2$ such that $c_1(0)=c_2(0)=x.$ Then there exists $\epsilon >0$ and two local paths  $(g_1,g_2) \in C^\infty(]-\epsilon,\epsilon[;G )^2$ such that $\forall i \in \N_2, c_i|_{]-\epsilon,\epsilon[} = g_i . x$
	Let $X_i = \partial_t c_i|_{t=0}.$ Then $$X_1+X_2 = X_2 + X_1 = \partial_t (g_1.g_2).x|_{t=0} \in {}^\rho T_xX$$ (in fact, we apply the same arguments as for the tangent space of a diffeological group along the lines of \cite{MR2016}, which expands the arguments of \cite{Les}). 
\end{proof}
\begin{example}[The $Diff-$diffeology and the $Diff-$tangent space.]
	Let $X$ be a Fr\"olicher space and let $G = Diff(X)$ the group of diffeomorphisms of $X.$ Following \cite{Ma2020-3}, the Diff-diffeology and the $Diff-$tangent space coincides with our construction in Definitions \ref{pG} and \ref{TG}.
\end{example}
	\subsection{Isomorphisms of vector pseudo-bundles}
	We now rephrase vector pseudo-bundle morphisms and isomorphisms described in \cite{Wu2021} in a non-categorical vocabulary.
	\begin{Definition}
		Let $\pi:E \rightarrow X$ and $\pi':F \rightarrow Y$ be two diffeological vector pseudo-bundles. $(\phi, \varphi)$ is a morphism of vector pseudo-bundles from $\pi:E \rightarrow X$ to $\pi':F \rightarrow Y$ if: 
		\begin{itemize}
			\item $\phi : E \rightarrow F$ is smooth
			\item $\varphi: X \rightarrow Y$ is smooth
			\item $\pi' \circ \phi = \varphi \circ \pi$
			\item $\forall x \in X, $ the restricted map $\phi|_{\pi^{-1}(x)} :  \pi^{-1}(x) \rightarrow \pi'^{-1}(\varphi(x))$ is linear (and also smooth in the subset diffeologies).  
		\end{itemize}
	\end{Definition}
\begin{rem}
	Following \cite{Wu2021}, if $(\phi, \varphi) \in Aut(E),$ then $\varphi$ is the restriction of $\phi$ to the zero-section of $E.$
\end{rem}
	Therefore, 
	\begin{Proposition}\label{3.2}
			Let $\pi:E \rightarrow X$ and $\pi':F \rightarrow Y$ be two diffeological vector pseudo-bundles. Let $(\phi, \varphi)$ be a morphism of vector pseudo-bundles from $\pi:E \rightarrow X$ to $\pi':F \rightarrow Y.$
	$(\phi,\varphi)$ is an isomorphism of vector pseudo-bundles if and only if $\phi \in Diff(E,F).$ If so, we have that:
	\begin{itemize}
		\item $\varphi \in Diff(X,Y)$
		\item $(\phi^{-1},\varphi^{-1})$ is a morphism of vector pseudo-bundles.
	\end{itemize} 
	\end{Proposition}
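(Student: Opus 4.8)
The plan is to unwind the categorical notion hidden in the word \emph{isomorphism}: $(\phi,\varphi)$ is an isomorphism of vector pseudo-bundles precisely when it admits an inverse morphism $(\psi,\chi)$, i.e. a morphism of vector pseudo-bundles with $\psi\circ\phi=Id_E$, $\phi\circ\psi=Id_F$, $\chi\circ\varphi=Id_X$, $\varphi\circ\chi=Id_Y$. With this reading, the forward implication is immediate: if $(\phi,\varphi)$ is an isomorphism, the component $\psi$ of an inverse morphism is a smooth two-sided inverse for $\phi$, so $\phi\in Diff(E,F)$ (and $\chi$ witnesses $\varphi\in Diff(X,Y)$, $(\psi,\chi)$ being the required inverse morphism).

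For the converse I would assume $\phi\in Diff(E,F)$ and construct the candidate $(\phi^{-1},\varphi^{-1})$ in a few steps. First I would check $\varphi$ is a bijection: surjectivity follows from $\varphi\circ\pi=\pi'\circ\phi$ together with surjectivity of $\pi'$ and $\phi$ (recall $\pi,\pi'$ are subductions by Definition~\ref{pseu-fib}, hence onto); injectivity uses the zero-section, exactly in the spirit of the Remark recalled above from \cite{Wu2021}: each restriction $\phi|_{\pi^{-1}(x)}$ is linear, so $\phi(0_x)=0_{\varphi(x)}$, and if $\varphi(x_1)=\varphi(x_2)$ then $\phi(0_{x_1})=\phi(0_{x_2})$, whence $0_{x_1}=0_{x_2}$ by injectivity of $\phi$, hence $x_1=x_2$ since the zero-section is a section of $\pi$. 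Second, I would prove $\varphi^{-1}$ smooth: composing $\pi'\circ\phi=\varphi\circ\pi$ on the right with $\phi^{-1}$ gives $\varphi^{-1}\circ\pi'=\pi\circ\phi^{-1}$, and the right-hand side is smooth since $\phi^{-1}$ and $\pi$ are; because $\pi'$ is a subduction, $\p_Y=\pi'_*(\p_F)$, so smoothness of $\varphi^{-1}\circ\pi'$ forces smoothness of $\varphi^{-1}$, giving $\varphi\in Diff(X,Y)$.

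It then remains to see $(\phi^{-1},\varphi^{-1})$ is a morphism of vector pseudo-bundles. The identity $\varphi^{-1}\circ\pi'=\pi\circ\phi^{-1}$ is exactly the intertwining relation, and it also shows $\phi^{-1}(\pi'^{-1}(y))\subseteq\pi^{-1}(\varphi^{-1}(y))$; symmetrically $\phi(\pi^{-1}(x))\subseteq\pi'^{-1}(\varphi(x))$, so each $\phi|_{\pi^{-1}(x)}$ is a \emph{linear bijection} of $\pi^{-1}(x)$ onto $\pi'^{-1}(\varphi(x))$, and its set-theoretic inverse $\phi^{-1}|_{\pi'^{-1}(\varphi(x))}$ is therefore linear; smoothness of this restriction in the subset diffeologies is inherited from global smoothness of $\phi^{-1}$. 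Since $(\phi^{-1},\varphi^{-1})$ is plainly a two-sided inverse of $(\phi,\varphi)$ componentwise, $(\phi,\varphi)$ is an isomorphism, and the two bulleted consequences have been obtained en route.

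Nothing here is deep; the statement is largely formal. The only step that genuinely uses the pseudo-bundle hypotheses rather than pure set theory is the implication ``$\varphi^{-1}\circ\pi'$ smooth $\Rightarrow$ $\varphi^{-1}$ smooth'', which relies crucially on $\pi'$ being a subduction, and the injectivity of $\varphi$, which is the single place the \emph{vector} (and not merely fiber) structure intervenes through the zero-section. I would treat those two points carefully and regard the rest as routine diagram chasing.
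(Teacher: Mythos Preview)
Your argument is correct and more careful than the paper's own proof, which is rather terse. The overall shape is the same: both proofs address only the nontrivial direction (from $\phi\in Diff(E,F)$ to isomorphism), use the zero-section, and check fiberwise linearity of $\phi^{-1}$ via $\phi^{-1}\circ\phi|_{\pi^{-1}(x)}=Id_{\pi^{-1}(x)}$. The one genuine technical difference is in the smoothness of $\varphi^{-1}$: the paper simply identifies $X$ and $Y$ with the zero-sections of $E$ and $F$ and reads off $\varphi^{-1}$ as the restriction of $\phi^{-1}$ to the zero-section of $F$, whereas you deduce smoothness from the relation $\varphi^{-1}\circ\pi'=\pi\circ\phi^{-1}$ together with the fact that $\pi'$ is a subduction. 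Your route has the advantage of invoking the pseudo-bundle axiom (subduction) directly and avoids the implicit claim that the zero-section, with its subset diffeology, is diffeomorphic to the base; the paper's route is shorter once that identification is taken for granted. You also spell out the forward implication and the bijectivity of $\varphi$, which the paper leaves tacit.
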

\begin{proof}
	Let $x \in X.$ Let us consider $\phi^{-1}\circ \phi|_{\pi^{-1}(x)}.$ Since $\phi^{-1}\circ \phi = Id_E,$ we have $\phi^{-1}\circ \phi|_{\pi^{-1}(x)} = Id_{\pi^{-1}(x)}$ therefore $\phi^{-1}$ is fiberwise linear. 
	
	Moreover, we have that $X$ is isomorphic to the zero-section of $E$ and that $Y$ is isomorphic to the zero-section of $F.$ Therefore, the restriction of $\phi^{-1}$ to the zero-section of $Y$ shows existence and smoothness of $\varphi^{-1}.$ 
	
\end{proof}
We remark the following, extending the notion of homotopy of vector bundles to vector pseudo-bundles:
\begin{Lemma}
	Let $X$ be a diffeological vector space. Then any vector pseudo-bundle $E$ over $X$ is homotopic to the null-vector space $X \times \{0\}$ in the category of vector pseudo-bundles, that it, there exists a homotpoty pseudo-bundle $H$ over $X \times [0;1]$ such that $H |_{X \times \{0\}}$ is isomorphic to $E$ and  $H |_{X \times \{1\}}$ is isomorphic to $X \times \{0\}.$ 
\end{Lemma}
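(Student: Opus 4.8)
The plan is to contract the fibres of $E$ linearly to their zero vectors, leaving the base $X$ fixed, and to encode this contraction as a single pseudo-bundle over $X\times[0;1]$. Since $E\to X$ is a diffeological vector pseudo-bundle, fibrewise scalar multiplication $\cdot:\R\times E\to E$ is smooth, so the map
$$ s:E\times[0;1]\To E\times[0;1],\qquad s(v,t)=\bigl((1-t)\cdot v,\;t\bigr) $$
is smooth, its first component factoring as $(v,t)\mapsto(1-t,v)\mapsto(1-t)\cdot v$ through $\R\times E$ and its second component being a projection. I would set $H:=s(E\times[0;1])$ with the push-forward diffeology along $s$, and define $p:H\to X\times[0;1]$ by $p(w,t)=(\pi(w),t)$, which makes sense because scalar multiplication is fibrewise. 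Since $1-t$ is an invertible scalar for $t\in[0;1[$ while $E_x$ is a vector space, one has $p^{-1}(x,t)=E_x\times\{t\}$ for $t<1$ and $p^{-1}(x,1)=\{(0_x,1)\}$; so $H$ ``is'' $E$ over $X\times[0;1[$ and the zero bundle over $X\times\{1\}$.

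First I would verify that $(H,p,X\times[0;1])$ is a diffeological vector pseudo-bundle. By construction $s$ is a subduction onto $H$, and $p\circ s=\pi\times\mathrm{id}_{[0;1]}$ is a subduction, being the product of the subductions $\pi$ (Definition~\ref{pseu-fib}) and $\mathrm{id}_{[0;1]}$; hence $p_*(\p_H)=(p\circ s)_*(\p_{E\times[0;1]})$ is the product diffeology of $X\times[0;1]$, i.e. $p$ is a subduction and $(H,p,X\times[0;1])$ is a diffeological fibre pseudo-bundle. The fibrewise operations $(w_1,t)+(w_2,t)=(w_1+w_2,t)$ and $\lambda\cdot(w,t)=(\lambda\cdot w,t)$ (the inner operations being those of $E$) are smooth --- one checks this by lifting plots through $s$ and invoking bilinearity and smoothness of the operations of $E$ --- and they make each $p^{-1}(x,t)$, which is diffeomorphic to $E_x$ via $(1-t)\cdot(-)$ for $t<1$ and equal to the zero space for $t=1$, a diffeological vector space.

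Then I would identify the two ends of the cylinder. Over $t=0$, every plot of $H$ with image in $p^{-1}(X\times\{0\})$ is, locally, of the form $s\circ(p_E,0)=(p_E,0)$ with $p_E$ a plot of $E$; hence $(v,0)\mapsto v$ is an isomorphism $H|_{X\times\{0\}}\cong E$ of vector pseudo-bundles. Over $t=1$, $H|_{X\times\{1\}}=\{(0_x,1)\mid x\in X\}$ is the zero-section of $E$ (paired with $\{1\}$) equipped with the diffeology for which $(0_x,1)\mapsto x$ is a diffeomorphism --- its plots being locally $u\mapsto(0_{q(u)},1)$ with $q$ a plot of $X$, since $\pi$ is a subduction --- so $H|_{X\times\{1\}}\cong X$, which is the zero bundle $X\times\{0\}$ (compare the argument in the proof of Proposition~\ref{3.2}). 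This exhibits $H$ as the desired homotopy pseudo-bundle.

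The main obstacle is the diffeological bookkeeping at the collapse point $t=1$: one must make sure that choosing the push-forward diffeology along $s$ --- rather than the subset diffeology of $E\times[0;1]$, which is ill-behaved exactly where the scalar $1-t$ vanishes --- still induces the expected subset diffeologies on the fibres and on the two end-restrictions. Once that choice is made, every verification reduces to lifting plots through $s$ together with the fact that multiplication by a nonzero scalar is a fibrewise diffeomorphism, so no new difficulty appears beyond the mildly delicate check that the fibrewise vector-space operations remain smooth as the fibre shrinks to a point.
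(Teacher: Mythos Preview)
Your argument is correct, but it takes a different route from the paper's. The paper does not use the fibrewise scalar multiplication at all: it sets $H$ equal to the quotient of $E\sqcup (X\times[0;1])$ (the paper writes a product sign, apparently a misprint for a disjoint union) obtained by identifying the zero-section $X\subset E$ with $X\times\{0\}$, equipped with the quotient diffeology. Thus the paper's $H$ has fibre $E_x$ over $(x,0)$ and the zero fibre $\{0\}$ over every $(x,t)$ with $t>0$; the collapse is instantaneous, and nothing of the vector-space structure beyond the existence of a zero section is invoked. Your construction instead exploits the smooth action of scalars to shrink the fibres gradually via $(v,t)\mapsto((1-t)\cdot v,t)$, and then takes the push-forward diffeology on the image, which forces the careful check at $t=1$ that you rightly flag (your lift-and-use-linearity argument for smoothness of the fibrewise operations is correct, since $(1-\tau)e_1+(1-\tau)e_2=(1-\tau)(e_1+e_2)$).

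The paper's approach is shorter and drives home the moral that pseudo-bundle homotopy is too coarse to distinguish anything: any fibre structure dies in one step, not merely those one can contract linearly. Your approach is closer in spirit to classical vector-bundle deformation retractions and would still make sense under additional constraints (for instance a fibre-dimension condition along $X\times[0;1[\,$) where the paper's abrupt collapse would fail.
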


\begin{proof}
	We set $$H = \left(E \times (X \times [0;1])\right) / \sim$$
	where the equivalence relation $\sim$ identifies $E \supset X \ni x  \sim (x,0)\in X \times [0;1].$
	Equipped with the quotient diffeology, this is a vector pseudo-bundle which ends the proof. 
\end{proof}
Therefore, isomorphic vector pseudo-bundles cannot be characterized by homotopies.

	\section{On the group of automorphisms and on the frame bundle of a vector pseudo-bundle }
	Let $\pi : E \rightarrow X$ be a vector pseudo-bundle. We note by $E_x$ the fiber $\pi^{-1}(x),$ for $x \in X.$ We also note by $GL(E)$ the gauge pseudo-bundle of endomorphisms of $E,$ that is, the group of automorphisms of $E$ which decompose fiberwise as an automorphism  of each fiber.   
	\subsection{On the group of automorphisms}
	\begin{Definition}
		We note by $Aut(E)$ the set of automorphisms of the vector pseudo-bundle $E.$
	\end{Definition}
\begin{Theorem}
	$Aut(E)$ is a diffeological group, as a diffeological subgroup of $Diff(E).$ Moreover, there is a short exact sequence 
	$$ 0 \rightarrow GL(E) \rightarrow Aut(E) \rightarrow G_E(X) \rightarrow 0$$
	where $G_E(X)$ is a diffeological subgroup of $Diff(X).$ 
\end{Theorem}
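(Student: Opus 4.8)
The plan is to verify the three kinds of assertions in turn: that $Aut(E)$ is a diffeological subgroup of $Diff(E)$, that there is a natural homomorphism $Aut(E)\to Diff(X)$ whose image is a subgroup $G_E(X)$, and that the kernel is exactly $GL(E)$, making the sequence exact. For the first point I would equip $Diff(E)$ with its functional diffeology (Proposition \ref{functf} guarantees this is well behaved: composition and inversion are smooth, so $Diff(E)$ is a diffeological group). Then $Aut(E)\subset Diff(E)$ is the subset of those $\phi$ that are fiberwise linear and cover a (necessarily smooth, by Proposition \ref{3.2}) diffeomorphism of $X$; one checks $Aut(E)$ is closed under composition and inversion using Proposition \ref{3.2}, which already tells us that if $\phi\in Diff(E)$ is a pseudo-bundle morphism then $(\phi^{-1},\varphi^{-1})$ is again one. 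Equipping $Aut(E)$ with the subset diffeology inherited from the functional diffeology of $Diff(E)$ then makes it a diffeological group, since the group operations are restrictions of the smooth operations on $Diff(E)$ and land back in $Aut(E)$.

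Next I would define the map $q:Aut(E)\to Diff(X)$ by $q(\phi,\varphi)=\varphi$. By the Remark following the morphism definition (citing \cite{Wu2021}), $\varphi$ is the restriction of $\phi$ to the zero-section of $E$, and since $X$ is identified (diffeomorphically) with that zero-section, $\varphi$ depends smoothly on $\phi$; hence $q$ is a smooth group homomorphism. I would then set $G_E(X):=q(Aut(E))$ with the pushforward diffeology, which is automatically a diffeological subgroup of $Diff(X)$ — it is the image of a group homomorphism, and pushforward of the (group) diffeology along a homomorphism again gives a diffeological group. The surjectivity of $Aut(E)\to G_E(X)$ is then true by definition of $G_E(X)$ as the image. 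One should remark that $G_E(X)$ is in general a proper subgroup of $Diff(X)$: a diffeomorphism of $X$ lifts to an automorphism of $E$ only if it respects the (possibly varying) isomorphism type of the fibers.

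Finally, for exactness at $Aut(E)$ and injectivity of $GL(E)\hookrightarrow Aut(E)$: the kernel of $q$ consists of those $(\phi,\varphi)\in Aut(E)$ with $\varphi=Id_X$, i.e. automorphisms covering the identity, which is precisely the definition of $GL(E)$ (the gauge pseudo-bundle), so $\ker q=GL(E)$ as groups; the inclusion $GL(E)\hookrightarrow Aut(E)$ is an induction of diffeologies (subset diffeology), so it is a smooth injective homomorphism and the sequence is exact as a sequence of diffeological groups. The step I expect to be the main obstacle is making the smoothness claims fully rigorous with the functional diffeology: specifically, checking that inversion $\phi\mapsto\phi^{-1}$ is smooth on $Aut(E)$ (smoothness of inversion in $Diff(E)$ with the functional diffeology is the delicate classical point, and here one also needs the fiberwise-inverse to vary smoothly, using Proposition \ref{3.2}), and that the identification of $X$ with the zero-section is a diffeomorphism onto its image so that $\phi\mapsto\varphi$ is genuinely smooth. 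These are the places where the absence of local trivializations could in principle cause trouble, so I would treat them carefully, reducing everything to the defining plots via Proposition \ref{functf}(1) on the exponential law.
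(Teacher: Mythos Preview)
Your proposal is correct and follows essentially the same approach as the paper's proof, which is very brief: it simply invokes Proposition \ref{3.2} to obtain that $Aut(E)$ is a diffeological subgroup and that the projection to $G_E(X)$ is a morphism of diffeological groups, then observes that $(\phi,Id_X)\in Aut(E)$ forces $\phi$ to be fiberwise a vector-space isomorphism, i.e.\ $\phi\in GL(E)$. Your version is a careful expansion of exactly this outline, and the points you flag as potential obstacles (smoothness of inversion in $Diff(E)$, the zero-section identification) are precisely the details the paper leaves implicit.
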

\begin{proof}
	From Proposition \ref{3.2}, we already have that $Aut(E)$ is a diffeological subgroup and that the projection $Aut(E) \rightarrow G_E(X)$ is a morphism of diffeological groups. If $(\phi, Id_X)\in Aut(E),$ then $\phi$ is fiberwise a diffeological vector space isomorphism, which ends the proof. 
\end{proof}
Let us analyze better the group $G_E(X).$
\begin{Lemma} \label{fiber}
	Let $(x,y) \in X^2$ such that $\exists (\phi,\varphi)\in Aut(E),$ $y = g(x).$ Then $\phi|_{E_x}$ is an isomorphism from $E_x$ to $E_y.$
\end{Lemma}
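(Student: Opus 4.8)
The statement to prove, Lemma \ref{fiber}, is essentially immediate from the definitions already set up, so the plan is short.

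The plan is to unwind what it means for $(\phi,\varphi) \in Aut(E)$ to send $x$ to $y = \varphi(x)$ (note the statement writes $g(x)$, but $g$ should be $\varphi$, the base map). By Proposition \ref{3.2}, since $(\phi,\varphi)$ is an isomorphism, $\phi \in Diff(E)$ and $\varphi \in Diff(X)$, and $(\phi^{-1},\varphi^{-1})$ is again a morphism of vector pseudo-bundles. First I would invoke the compatibility condition $\pi \circ \phi = \varphi \circ \pi$: this forces $\phi(E_x) \subseteq E_{\varphi(x)} = E_y$, and applying the same reasoning to $\phi^{-1}$ (with $\varphi^{-1}$) gives $\phi^{-1}(E_y) \subseteq E_x$, so in fact $\phi$ restricts to a bijection $\phi|_{E_x} : E_x \to E_y$. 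Next I would note that the fiberwise linearity clause in the definition of a morphism says precisely that $\phi|_{E_x} : E_x \to E_{\varphi(x)}$ is linear and smooth for the subset diffeologies, and the corresponding clause for $(\phi^{-1},\varphi^{-1})$ gives that $(\phi|_{E_x})^{-1} = \phi^{-1}|_{E_y}$ is linear and smooth as well. Combining these, $\phi|_{E_x}$ is a smooth linear bijection with smooth linear inverse, i.e. an isomorphism of diffeological vector spaces from $E_x$ onto $E_y$.

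There is essentially no obstacle here: the only mild point requiring care is the interplay of the subset diffeologies on $E_x$ and $E_y$ with the ambient smoothness of $\phi$ and $\phi^{-1}$, which is exactly what the last bullet of the morphism definition and Proposition \ref{3.2} were arranged to deliver. I would therefore present the proof as a two-line argument: the relation $\pi\circ\phi = \varphi\circ\pi$ together with its analogue for the inverse pins down that $\phi$ maps $E_x$ bijectively onto $E_y$, and fiberwise linearity of both $\phi$ and $\phi^{-1}$ upgrades this to an isomorphism of diffeological vector spaces. (This also silently records that the $g$ appearing in the statement is the base component $\varphi$ of the automorphism, so the hypothesis is simply $y \in \{\varphi(x) : (\phi,\varphi)\in Aut(E)\}$.)
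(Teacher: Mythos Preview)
Your proposal is correct and matches the paper's treatment: the paper declares the lemma ``easy'' and ``straightforward'' and omits the proof entirely, and the argument you outline---using $\pi\circ\phi=\varphi\circ\pi$ (and its counterpart for the inverse furnished by Proposition~\ref{3.2}) to see that $\phi$ restricts to a bijection $E_x\to E_y$, then invoking the fiberwise-linearity-and-smoothness clause of the morphism definition on both $\phi$ and $\phi^{-1}$---is exactly the intended verification. Your parenthetical identification of the $g$ in the statement with the base component $\varphi$ is also the correct reading of what is a typographical slip in the paper.
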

The proof of this easy lemma is straightforward, but has deep consequences, for which proofs are straightforward and to our opinion needless. 

\begin{Proposition} \label{prop:fiberisom}
	If $G_E(X) $ acts transitively on $X,$ then $E$ has a typical fiber, i.e.  $\forall (x,y) \in X^2., E_x$ is isomorphic to $E_y$ as diffeological vector spaces.
\end{Proposition}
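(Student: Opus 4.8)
The plan is to read the statement off from Lemma~\ref{fiber} and Proposition~\ref{3.2}, so that almost nothing beyond unwinding definitions is required. Fix an arbitrary pair $(x,y)\in X^2$. By transitivity of the action of $G_E(X)$ on $X$, there is an automorphism $(\phi,\varphi)\in Aut(E)$ whose base component $\varphi\in G_E(X)$ satisfies $\varphi(x)=y$ (this is precisely the hypothesis of Lemma~\ref{fiber}, reading its ``$y=g(x)$'' as $y=\varphi(x)$). Lemma~\ref{fiber} then yields that the restriction $\phi|_{E_x}\colon E_x\to E_y$ is an isomorphism of the two fibers.

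What still has to be checked is that $\phi|_{E_x}$ is an isomorphism \emph{in the category of diffeological vector spaces}, i.e.\ a linear smooth map with linear smooth inverse for the subset diffeologies, not merely a linear bijection. This is where Proposition~\ref{3.2} enters: since $(\phi,\varphi)\in Aut(E)$ we have $\phi\in Diff(E)$, hence $(\phi^{-1},\varphi^{-1})$ is again a morphism of vector pseudo-bundles, so $\phi^{-1}|_{E_y}\colon E_y\to E_x$ is linear and smooth for the subset diffeologies. Because $\phi^{-1}\circ\phi=Id_E$ restricts fiberwise to $Id_{E_x}$ (and symmetrically $\phi\circ\phi^{-1}=Id_E$ restricts to $Id_{E_y}$), the map $\phi^{-1}|_{E_y}$ is the two-sided inverse of $\phi|_{E_x}$. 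Since by definition of a morphism of vector pseudo-bundles $\phi|_{E_x}$ is itself linear and smooth for the subset diffeologies, we conclude that $\phi|_{E_x}$ is a diffeological vector space isomorphism $E_x\to E_y$.

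As $(x,y)$ was arbitrary, all fibers of $E$ are pairwise isomorphic as diffeological vector spaces, which is exactly the assertion that $E$ has a typical fiber; if one insists on a single model space, one picks any $E_{x_0}$ and uses that ``isomorphic as diffeological vector spaces'' is an equivalence relation. I do not expect a genuine obstacle here: the only point requiring a moment's care is the distinction between a fiberwise linear bijection and a fiberwise diffeological isomorphism, and that is settled precisely by the fact, recorded in Proposition~\ref{3.2}, that the inverse of an automorphism of $E$ is again a pseudo-bundle morphism. Everything else is definition chasing.
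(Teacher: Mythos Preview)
Your proof is correct and follows exactly the approach the paper intends: the paper declares the proposition a straightforward consequence of Lemma~\ref{fiber} and omits the argument, and you simply spell out that deduction, adding the (appropriate) verification via Proposition~\ref{3.2} that the fiberwise isomorphism is one of diffeological vector spaces. There is nothing to add or correct.
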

We now define a new diffeology on $E,$ that is not the $Aut(E)-$ diffeology in the sense of section 3.1, but  
$$ \p_{Aut(E)} = \p \cap \pi^* (\p_{G_E(X)})$$
	where $\p_{G_E(X)})$ is the diffeology defined by the action of $G_E(X)$ on $X$ along the lines of section 3.1.
\begin{Proposition}
	The vector pseudo-bundle $(E,\p_{Aut(E)})$ has connected components with typical fiber.
\end{Proposition}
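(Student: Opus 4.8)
The plan is to show that every connected component $C$ of $(E,\p_{Aut(E)})$, equipped with the fiberwise operations $+$ and $\cdot$ inherited from $E$, is a diffeological vector bundle in the sense of the definition following Definition \ref{pseu-fib}, i.e. that all fibers of $C$ are mutually isomorphic as diffeological vector spaces. Three ingredients are available: (a) $\p_{Aut(E)} = \p \cap \pi^*(\p_{G_E(X)})$ is a diffeology contained in $\pi^*(\p_{G_E(X)})$, so $\pi$ is still smooth as a map $(E,\p_{Aut(E)}) \to (X,\p_{G_E(X)})$; (b) the diffeology $\p_{G_E(X)}$ separates the $G_E(X)$-orbits, so every connected component of $(X,\p_{G_E(X)})$ is contained in a single orbit; (c) Lemma \ref{fiber}, which produces a fiber isomorphism $\phi|_{E_x}\colon E_x\to E_y$ from any $(\phi,\varphi)\in Aut(E)$ with $\varphi(x)=y$.

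First I would fix a connected component $C$ of $(E,\p_{Aut(E)})$ and locate its image. Since $\p_{Aut(E)}\subseteq\pi^*(\p_{G_E(X)})$, for every plot $p\in\p_{Aut(E)}$ the composite $\pi\circ p$ is a plot of $\p_{G_E(X)}$, hence $\pi$ is smooth and thus $D$-continuous between $(E,\p_{Aut(E)})$ and $(X,\p_{G_E(X)})$. Consequently $\pi(C)$ is a connected subset of $(X,\p_{G_E(X)})$, therefore contained in a single connected component of $(X,\p_{G_E(X)})$, which by (b) lies inside one $G_E(X)$-orbit $\mathcal{O}$.

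Next I would verify that $C$ contains whole fibers, that is $\pi^{-1}(x)\cap C = E_x$ for every $x\in\pi(C)$. For $e\in E_x$ the path $c\colon t\mapsto t\cdot e$ is a plot of $\p$ (it is the composite of the smooth scalar multiplication with $t\mapsto(t,e)$), and $\pi\circ c$ is the constant parametrisation at $x$, hence a plot of $\p_{G_E(X)}$; therefore $c\in\p\cap\pi^*(\p_{G_E(X)})=\p_{Aut(E)}$, so $c$ joins $e$ to the zero vector $0_x$ inside $(E,\p_{Aut(E)})$. Hence $E_x$ is connected in $\p_{Aut(E)}$ and, meeting $C$, is contained in $C$. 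In particular the fiberwise maps $+$ and $\cdot$ of $E$ restrict to $C$, so $\pi|_C\colon C\to\pi(C)$ is a diffeological vector pseudo-bundle. Finally, given $x,y\in\pi(C)\subseteq\mathcal{O}$, transitivity of $G_E(X)$ on the orbit $\mathcal{O}$ provides $(\phi,\varphi)\in Aut(E)$ with $\varphi(x)=y$, and Lemma \ref{fiber} gives that $\phi|_{E_x}\colon E_x\to E_y$ is an isomorphism of diffeological vector spaces; thus all fibers of $C$ are pairwise isomorphic and $C$ has a typical fiber, which is the claim.

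The step that requires the most care is the ``saturation'' of connected components, namely that $C$ contains the whole fiber over each point of $\pi(C)$ and that $\pi(C)$ cannot meet two distinct orbits: both rest precisely on the fact that plots of the intersection diffeology $\p\cap\pi^*(\p_{G_E(X)})$ project through $\pi$ to plots of $\p_{G_E(X)}$, which confines connected pieces upstairs to connected pieces downstairs while the vertical scalar-multiplication paths keep each fiber connected. Everything else is a routine transfer of the vector pseudo-bundle axioms from $E$ to the sub-pseudo-bundle $C$, the genuine content being already packaged in Lemma \ref{fiber} and in the orbit-separation property of $\p_{G_E(X)}$.
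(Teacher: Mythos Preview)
Your proof is correct and supplies precisely the straightforward argument the paper leaves to the reader (the paper states this proposition immediately after remarking that the proofs are ``to our opinion needless'' and gives no explicit argument). The two ingredients you invoke---the orbit-separation property of $\p_{G_E(X)}$ stated after Definition~\ref{pG}, and Lemma~\ref{fiber}---are exactly what the surrounding text sets up for this purpose, and your saturation step (each fiber is $\p_{Aut(E)}$-connected via the scalar-multiplication path, so connected components are unions of whole fibers over a single $G_E(X)$-orbit) is the natural way to make the claim precise.
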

Therefore, we can produce the following definitions and properties: 
\begin{Definition}
	Let $X$ be a diffeological space. Let $F$ be a vector pseudo-bundle over $X.$ Then $X$ is $F-$singular if  the vector pseudo-bundle $F$ is not a vector bundle. A $F-$\textbf{singularity} is a point $x \in X$ such that there exists no base of neighborhoods $\{V_k \, | \, k \in K\}$ of $x$ in the $D-$topology such that $$\forall k \in K, \quad \pi^{-1}(V_k) \hbox{ is a vector bundle with typical fiber } \pi^{-1}(x).$$ 
\end{Definition}
We get easily the following lemma as a corollary of Proposition \ref{prop:fiberisom}: 
\begin{Lemma}
	Under the last notations, assume that $G_F(X) = Diff(X).$ Let $\{F_i \, | \, i\in I \}$ be the collection of (non isomorphic) fibers of $F,$ indexed by $I.$ Let 
	$$X_{F_i} = \{x \in X \, | \, \pi^{-1}(x) \hbox{ is isomorphic to } F_i\}.$$
	Then $\forall i \in I, X_{F_I} $ is an invariant subset of any diffeomorphism $\varphi \in Diff(X).$
	
\end{Lemma}
We have here at hand at least two examples of such vector pseudo-bundles $F,$ already developed in the literature:
\begin{enumerate}
	\item The group of diffeomorphisms $Diff(X)$ act on the cotangent space $F=T^*X$ (see \cite{Igdiff} for a definition)
	\item The group of diffeomorphisms $Diff(X)$ acts on germs of smooth paths in $X$ and hence it acts on the internal tangent space $F={}^iTX$ defined in \cite{CW2014}.
\end{enumerate}
For each of these two examples, we have a morphism o diffeological groups
$$ Diff(X) \rightarrow Aut(E)$$ which is fiberwise, that is, with a natural left inverse map $$Aut(E) \rightarrow Diff(X)$$ which is exactly the projection $$Aut(E) \rightarrow G_E(X),$$ which implies that $$G_E(X) = Diff(X).$$
Therefore, when $F = T^*X$ or $F={}^iTX,$ any $F-$singularity classifies the diffeomorphisms, heuristically speaking. In particular
\begin{Theorem}
	In the two previous choices of $F,$ any diffeomorphism $\varphi$ smoothly homotopic to the identity map leaves the connected components of each $X_{F_i}$ globally invariant. In particular, an isolated $F-$singularity is a fixed point for $\varphi.$
\end{Theorem}
\begin{proof}
	Let $\varphi_t$ be a smooth path in $Diff(X)$ such that $\varphi_0 = Id_X$ and $\varphi_1 = \varphi.$ Then the fibers over $\varphi_t(x)$ are isomorphic for $t \in [0;1].$
\end{proof}
\begin{rem}
	Last theorem also characterizes disconnected parts on the $Diff-$diffeology on $X.$ This is not another result, but only a reformulation of last proposition.
\end{rem}
\subsection{Frame bundles for diffeological vector bundles}
We define here the geometric objects related to frame bundles of diffeological vector bundles, along the lines of the classical constructions for frames for finite or infinite dimensional vector bundles, see e.g. \cite{Ma2006}. 
\begin{Definition} \label{frame-aut}
	Let $\pi:E\rightarrow X$ be a diffeological vector space with typical fiber $F.$ Let $x \in X$ and let $E_x$ be the fiber over $x.$
	We define the space
	$$Fr(E_x)$$ as the diffeological space of isomorphisms of diffeological vector spaces from $F$ to $E_x,$
	and we define the \textbf{frame bundle} of $E$ by:
	$$ Fr(E) = \coprod_{x \in X} Fr(E_x).$$
\end{Definition} 
We denote by $GL(F)$ the (diffeological) group of (diffeological) isomorphisms of the diffeological vector space $F.$
With this notation, and considering the compostion of diffeological linear maps, we remark that: 
\begin{itemize}
	\item $GL(F)$ acts on the right on $Fr(E)$ and, $\forall x \in X,$ on $Fr(E_x),$
	\item $GL(E)$ acts on the left on $Fr(E),$ and, $\forall x \in X,$ through the canonical inclusion map $GL(E_x) \hookrightarrow GL(E),$ $GL(E)$ and $GL(E_x)$ are acting on the left on $Fr(E_x),$
	\item The group $Aut(E)$ is acting on the left on $Fr(E).$
\end{itemize}

Then, applying Definition \ref{pG}, we get: 
\begin{Lemma}
	$(Fr(E), \p_{Aut(E)}(Fr(E)))$ is a diffeological space.
\end{Lemma}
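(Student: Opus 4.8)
The plan is to recognise this statement as a direct application of Definition \ref{pG}: once we know that $Aut(E)$ acts on the \emph{set} $Fr(E)$, the diffeology $\p_{Aut(E)}(Fr(E))$ is, by definition, the push-forward of the diffeology of $Aut(E)$ under that action, and push-forward diffeologies are automatically diffeologies. So the only genuine point to settle is the existence of the set-theoretic action of $Aut(E)$ on $Fr(E)$; after that, one just unwinds why the push-forward satisfies the three axioms of Definition \ref{d:diffeology}.

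First I would write the action explicitly: for $(\phi,\varphi)\in Aut(E)$ and a frame $u\in Fr(E_x)$, i.e.\ an isomorphism of diffeological vector spaces $u\colon F\to E_x$, set $(\phi,\varphi)\cdot u:=\phi\circ u$. The key verification — essentially the only one with content — is that $\phi\circ u$ is again a frame, now over $\varphi(x)$. This is exactly where Lemma \ref{fiber} is used: since $\varphi(x)$ is obtained from $x$ by the automorphism $(\phi,\varphi)$, the restriction $\phi|_{E_x}\colon E_x\to E_{\varphi(x)}$ is an isomorphism of diffeological vector spaces, so $\phi\circ u\colon F\to E_{\varphi(x)}$ is a composite of such isomorphisms and therefore lies in $Fr(E_{\varphi(x)})\subset Fr(E)$; smoothness of $\phi\circ u$ and of its inverse in the subset diffeologies follows from $\phi\in Diff(E)$ (Proposition \ref{3.2}) together with $u$ being a diffeological isomorphism. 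The identities $Id_E\circ u=u$ and $(\psi\circ\phi)\circ u=\psi\circ(\phi\circ u)$ then show this is a bona fide left action, namely the action already listed just before the statement.

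Finally, taking $G=Aut(E)$ — a diffeological group by the theorem above — acting on the set $Fr(E)$ as just described, Definition \ref{pG} produces the push-forward diffeology $\p_{Aut(E)}(Fr(E))$, so $(Fr(E),\p_{Aut(E)}(Fr(E)))$ is a diffeological space. If one prefers to bypass the definition and check the axioms of Definition \ref{d:diffeology} directly for this push-forward, they are all routine: covering holds because the constant map at any $u\in Fr(E)$ is the push-forward of the constant plot at $(Id_E,u)$ in $Aut(E)\times Fr(E)$; locality and smooth compatibility are the standard closure properties a push-forward diffeology enjoys. I therefore expect no real obstacle here — the one place that needs attention is the well-definedness of the $Aut(E)$-action on $Fr(E)$, and that reduces immediately to Lemma \ref{fiber}; the role of the statement is mainly to pin down the diffeology on the frame bundle that will be used in the sequel.
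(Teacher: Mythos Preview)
Your proposal is correct and follows exactly the paper's approach: the paper simply states the lemma as an immediate consequence of Definition \ref{pG}, after having recorded (without proof) that $Aut(E)$ acts on the left on $Fr(E)$. Your write-up fills in the well-definedness of that action via Lemma \ref{fiber}, which is precisely the missing step the paper leaves implicit.
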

With this diffeology, the action of $Aut(E)$ on $Fr(E)$ is smooth, and we moreover have:
\begin{Theorem}
	The diffeological space $Fr(E)$ is:
	\begin{enumerate}
		\item a Souriau quantum structure, with respect to the group $GL(F),$ 
\item a diffeological fiber bundle, with typical fiber $GL(F).$	
\end{enumerate}
\end{Theorem}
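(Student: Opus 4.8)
The plan is to follow the classical construction of the frame bundle while checking that every structure map is smooth for the push-forward diffeology $\p_{Aut(E)}(Fr(E))$. First I would record the elementary facts. Since $E$ is a diffeological vector bundle with typical fibre $F$, each $Fr(E_x)=\mathrm{Iso}(F,E_x)$ is nonempty; the right action $Fr(E)\times GL(F)\to Fr(E)$, $(u,g)\mapsto u\circ g$, is well defined and \emph{free}, since an isomorphism cancels ($u\circ g=u\Rightarrow g=Id_F$); and it is \emph{fibrewise transitive}, since for $u,v\in Fr(E_x)$ one has $u=v\circ(v^{-1}\circ u)$ with $v^{-1}\circ u\in GL(F)$. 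Hence the $GL(F)$-orbits in $Fr(E)$ are exactly the fibres $Fr(E_x)$, the orbit map is the projection $\pi_{Fr}:Fr(E)\to X$, and $Fr(E)/GL(F)=X$ as sets.

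\emph{Proof of (1).} I would first check that the right $GL(F)$-action is smooth for $\p_{Aut(E)}(Fr(E))$. By Definition \ref{pG}, a plot of this diffeology is, locally, of the form $t\mapsto\Phi(t)\cdot u_0$ with $\Phi$ a plot of $Aut(E)$ and $u_0\in Fr(E)$ fixed. Since the left $Aut(E)$-action and the right $GL(F)$-action commute, composing with a plot $g$ of $GL(F)$ gives $t\mapsto\Phi(t)\cdot(u_0\circ g(t))$, and the point is that $t\mapsto u_0\circ g(t)$ is again such a plot: conjugation by the fixed frame $u_0$ realises $g(t)$ as the fibrewise isomorphism $u_0\circ g(t)\circ u_0^{-1}$ of $E_{x_0}$, which is absorbed into an element of $Aut(E)$. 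Next I would verify that $\pi_{Fr}$ is a subduction onto $(X,\p_{G_E(X)})$: the push-forward of $\p_{Aut(E)}(Fr(E))$ along $\pi_{Fr}$ is generated by the parametrisations $t\mapsto\varphi_t(x_0)$ coming from plots $\Phi=(\phi_t,\varphi_t)$ of $Aut(E)$, which is exactly $\p_{G_E(X)}$; conversely every plot of $\p_{G_E(X)}$ lifts, because $Aut(E)\to G_E(X)$ is a subduction (the quotient diffeology of Proposition \ref{quotient}), and after lifting $\varphi_t$ to $\Phi(t)$ the map $t\mapsto\Phi(t)\cdot u_0$ is the required lift. Together with freeness this exhibits $Fr(E)\to X$ as a diffeological principal pseudo-bundle with structure group $GL(F)$, each fibre $Fr(E_x)$ being a torsor under $GL(F)$. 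Finally I would identify its gauge pseudo-bundle with $GL(E)=\coprod_{x}GL(E_x)$ acting on $Fr(E)$ on the left, and, using once more the fibrewise identification $E_x\cong F$, recognise it over each connected component of $(X,\p_{G_E(X)})$ as $X\times GL(F)\to X$, which is the definition of a Souriau quantum structure.

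\emph{Proof of (2).} For each $x$ fix $\alpha_x\in Fr(E_x)$. Then $g\mapsto\alpha_x\circ g$ is a bijection $GL(F)\to Fr(E_x)$ by transitivity and freeness; it is smooth for the subset diffeology inherited from $\p_{Aut(E)}(Fr(E))$ because it factors through the (smooth) right action, and its inverse $u\mapsto\alpha_x^{-1}\circ u$ is smooth as well, so every fibre of $\pi_{Fr}$ is diffeomorphic to $GL(F)$. Combined with the subduction property of $\pi_{Fr}$ from (1), this is exactly the statement that $\pi_{Fr}:Fr(E)\to X$ is a diffeological fibre (pseudo-)bundle with typical fibre $GL(F)$.

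\emph{Main obstacle.} The substantive step is the smoothness of the right $GL(F)$-action — and, relatedly, of the fibrewise identifications with $GL(F)$ — with respect to $\p_{Aut(E)}(Fr(E))$: because this diffeology is a push-forward by the $Aut(E)$-action rather than a hands-on functional diffeology, one must show that a fibrewise reparametrisation $u_0\circ g(t)$ can always be re-expressed through plots of $Aut(E)$, and this is precisely where the hypothesis that $E$ is a genuine diffeological vector bundle with typical fibre $F$ (so that $GL(F)$ is realised inside each $GL(E_x)$) is used in an essential way. Freeness, fibrewise transitivity, the identification $Fr(E)/GL(F)=X$, and the subduction property of $\pi_{Fr}$ are, by contrast, formal consequences of unwinding the push-forward definitions.
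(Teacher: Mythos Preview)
Your proposal is correct and rests on the same algebraic skeleton as the paper's proof: freeness and fibrewise transitivity of the right $GL(F)$-action (via $u=v\circ(v^{-1}\circ u)$), and the identification $Fr(E_x)\cong GL(F)$ through $f\mapsto f_1^{-1}\circ f$ for a fixed frame $f_1$. The paper's argument stops there: it records that $f_2\circ f_1^{-1}\in GL(E_x)$ and $f_1^{-1}\circ f_2\in GL(F)$, declares free/transitive actions, and takes the map $f\mapsto f_1^{-1}\circ f$ as the typical-fibre identification, without discussing smoothness of the right action, the subduction property of $\pi_{Fr}$, or the lift of plots of $\p_{G_E(X)}$.

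What you do differently is supply the diffeological layer the paper leaves implicit. Your analysis of plots of $\p_{Aut(E)}(Fr(E))$ as local orbits $t\mapsto\Phi(t)\cdot u_0$, the commutation of left $Aut(E)$- and right $GL(F)$-actions, and the realisation of $g(t)$ inside $GL(E_{x_0})\subset GL(E)\subset Aut(E)$ via conjugation by the fixed frame $u_0$ are the natural ingredients, and you are right to flag this last step as the substantive one. One point worth tightening in an actual write-up: when you say the conjugate $u_0\, g(t)\, u_0^{-1}$ ``is absorbed into an element of $Aut(E)$'', you are implicitly extending a fibrewise automorphism of $E_{x_0}$ by the identity on the other fibres and asserting that the resulting family is a plot of $Aut(E)$ for the functional diffeology; this is where the ambient diffeology on $E$ enters, and it deserves a sentence rather than a clause. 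Apart from that, your plan is a strict enlargement of the paper's proof, and the extra care it buys is precisely a verification that the word ``diffeological'' in the statement is earned.
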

\begin{proof}
	Given two frames $f_1$ and $f_2$ in $Fr(E_x)$ we remark that 
	\begin{itemize}
		\item $f_2 \circ f_1^{-1} \in GL(E_x)$
		\item $f_1^{-1} \circ f_2 \in GL(F)$
	\end{itemize}
therefore, it is straightforward to check that, on $Fr(E_x),$ the left action of $GL(E_x)$ and the right action of $GL(F)$ are  free and transitive, which shows (1). 
Moreover, the map $ f \in Fr(E) \mapsto f_1^{-1} \circ f $ identifies $Fr(E_x)$ with $GL(F),$ which shows (2).
\end{proof}
\section{Connection 1-forms versus covariant derivatives}
\subsection{Differential forms with values in vector pseudo-bundles}
Let us consider the de Rham complexes $\Omega(E,\R)$ and $\Omega(M,\R).$ As diffeological subgroups of $Diff(E),$ $Aut(E)$ and $ GL(E)$ are acting $\Omega(E,\R)$ and  the same way, $G_E(X)$ is acting on $\Omega^*(X).$ Let us precise this action. For ant plot $p: U \rightarrow X,$ differential form $\alpha \in \Omega^*(X)$ pulls-back, by definition, to the (classical) differential form $p^*\alpha \in \Omega^*(U).$ Given $g \in Diff(X),$ then $q = g \circ p$ is also a plot of $X$ and hence, $g\in Diff(X)$ transforms  $p^*\alpha$ to $q^*\alpha.$ The same construction holds for $Diff(E), Aut(E), G_E(X)$ for their corresponding target spaces.   
In all this section, and at each time where we appeal to the notions that are developed here, $E$ is equipped wit its $Aut(E)-$diffeology and $M$ with its $G_E(X)-$diffeology. Let $p \in \p_{G_E(M))}.$
Let $O_p$ be the domain of $p$ and let $$\alpha_p: \wedge^nTO \rightarrow E$$
be a smooth map such that, if $u \in O_p$ and $x = p(u),$ then $\alpha_p$ respectricts to a skew-symmetric $n-$linear map from $T^n_uO_p$ to $E_x .$
With these notations,
one can define a $n-$form $\alpha \in \Omega^n(M,E)$ 
A $E-$valued $n-$differential form $\alpha$ on $X$ (noted $\alpha \in \Omega^n(X,E))$ is a map 
$$ \alpha : \{p:O_p\rightarrow X\} \in \p \mapsto \alpha_p $$
(where $\alpha_p$ has the property described before)
such that 

$\bullet$ Let $x\in X.$ $\forall p,p'\in \p$ such that $x\in Im(p)\cap Im(p')$, 
the forms $\alpha_p$ and $\alpha_{p'}$ are of the same order $n.$ 

$\bullet$ Moreover, let $y\in O_p$ and $y'\in O_{p'}.$ If $(X_1,...,X_n)$ are $n$ germs of paths in 
$Im(p)\cap Im(p'),$ if there exists two systems of $n-$vectors $(Y_1,...,Y_n)\in (T_yO_p)^n$ and $(Y'_1,...,Y'_n)\in (T_{y'}O_{p'})^n,$ if $p_*(Y_1,...,Y_n)=p'_*(Y'_1,...,Y'_n)=(X_1,...,X_n),$
$$ \alpha_p(Y_1,...,Y_n) = \alpha_{p'}(Y'_1,...,Y'_n).$$

We note by $$\Omega(X;E)=\oplus_{n\in \mathbb{N}} \Omega^n(X,E)$$ the set of $E-$valued differential forms. 
With such a definition, we feel the need to make two remarks for the reader:

$\bullet$ If there does not exist $n$ linearly independent vectors $(Y_1,...,Y_n)$
defined as in the last point of the definition, $\alpha_p = 0$ at $y.$

$\bullet$ Let $(\alpha, p, p') \in \Omega(X,V)\times \p^2.$ 
If there exists $g \in C^\infty(D(p); D(p'))$ (in the usual sense) 
such that $p' \circ g = p,$ then $\alpha_p = g^*\alpha_{p'}.$ 

\vskip 12pt
\begin{Proposition}
	The set $\p(\Omega^n(X,E))$ made of maps $q:x \mapsto \alpha(x)$ from an open subset $O_q$ of a 
	finite dimensional vector space to $\Omega^n(X,E)$ such that for each $p \in \p,$ $$\{ x \mapsto \alpha_p(x) \} \in C^\infty(O_q, \Omega^n(O_p,E)),$$
	is a diffeology on $\Omega^n(X,E).$  
\end{Proposition}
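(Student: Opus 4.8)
The plan is to recognize $\p(\Omega^n(X,E))$ as an initial (pull-back) diffeology with respect to a natural family of ``localization'' maps, and then to invoke the standard fact — already used implicitly for product diffeologies in Definition \ref{d:diffeol product} — that an arbitrary intersection of pull-back diffeologies is again a diffeology. So the proof splits into two moves: first identify the correct family of target diffeological spaces and the maps landing in them, then quote the intersection principle.

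First I would fix, for each plot $p : O_p \rightarrow X$ in $\p$ (say with $O_p \subset \R^d$), the natural diffeology on the set $\Omega^n(O_p,E)$ of $E$-valued $n$-forms over the finite dimensional domain $O_p$: viewing such a form as a map $O_p \times (\R^{d})^n \rightarrow E$ that is fiberwise alternating $n$-linear and lies over $p$, one takes the subset diffeology (Definition \ref{d:diffeol subset}) inherited from the functional diffeology on $C^\infty\bigl(O_p \times (\R^d)^n, E\bigr)$; by Proposition \ref{functf} this is a genuine diffeological space. There is then a well-defined ``localization at $p$'' map $L_p : \Omega^n(X,E) \rightarrow \Omega^n(O_p,E)$, $\alpha \mapsto \alpha_p$ — well-defined precisely because, by the definition of $\Omega^n(X,E)$ given above, an element $\alpha$ \emph{is} exactly a coherent assignment $p \mapsto \alpha_p$.

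Next I would observe that the defining condition of $\p(\Omega^n(X,E))$ says exactly that a parametrisation $q : O_q \rightarrow \Omega^n(X,E)$, $x \mapsto \alpha(x)$, is a plot if and only if $L_p \circ q \in \p(\Omega^n(O_p,E))$ for every $p \in \p$, i.e.
$$\p(\Omega^n(X,E)) = \bigcap_{p \in \p} L_p^{*}\bigl(\p(\Omega^n(O_p,E))\bigr).$$
Each $L_p^{*}(\p(\Omega^n(O_p,E)))$ is a pull-back diffeology, and the three axioms of Definition \ref{d:diffeology} pass to an arbitrary intersection verbatim: covering and smooth compatibility hold because constant maps are plots and plots precompose with smooth maps, for every factor $L_p^{*}(\cdot)$ simultaneously, and locality holds because being a plot into each $\Omega^n(O_p,E)$ is a local condition. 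This is literally the argument showing $\cap_{i}\pi_i^{*}(\p_i)$ is a diffeology in Definition \ref{d:diffeol product}. (Equivalently one can verify the three axioms directly on $\p(\Omega^n(X,E))$: for constant $q$ each $x \mapsto \alpha_p(x)$ is constant hence smooth; if $q$ is locally a plot then each $x \mapsto \alpha_p(x)$ is locally smooth hence smooth; and if $F$ is smooth into $O_q$ then $x \mapsto \alpha_p(F(x))$ is the composition of $F$ with the smooth map $x \mapsto \alpha_p(x)$.) I do not expect any genuine obstacle here: the only point requiring care is the ``base case'', namely that $\Omega^n(O_p,E)$ over a finite dimensional domain carries a well-defined diffeology and that each $L_p$ is well-defined, after which the statement is pure bookkeeping.
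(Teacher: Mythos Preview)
The paper states this proposition without proof, so there is nothing to compare against. Your argument is correct: recognizing $\p(\Omega^n(X,E))$ as $\bigcap_{p\in\p} L_p^{*}\bigl(\p(\Omega^n(O_p,E))\bigr)$ and invoking the stability of the diffeology axioms under arbitrary intersection is the standard and cleanest way to handle such ``initial'' diffeologies, and your parenthetical direct verification of the three axioms is also fine. The only point that deserves a word of care is the one you flagged yourself, namely that $\Omega^n(O_p,E)$ carries a well-defined diffeology via the subset diffeology inside $C^\infty(O_p\times(\R^d)^n,E)$; once that is granted the rest is indeed bookkeeping.
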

\begin{rem}
	We have here to warn about the temptation of a straightforward extension of the classical wedge product of differential forms which only exists of $E$ is fiberwise equipped with a smooth fiberwise multiplication. 
	Moreover, the de Rham differential also requires much more attention.
\end{rem}
The following proposition is straightforward  from the definition of the diffeologies of $Aut(E)$ and of $G_E(X).$
\begin{Proposition}
	The group $Aut(E)$ is acting smoothly on $\Omega(X,E)$ 
	the following way: 
	let $\alpha \in \Omega(M,E)$ and let $g=(\phi,\varphi) \in Aut(E),$ 
	we define $g_*\alpha$ on each $p \in \p$ by 
	$$\left(g_*\alpha\right)_p = \phi \circ \alpha_{\varphi^{-1} \circ p}.$$
\end{Proposition}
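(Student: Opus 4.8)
The plan is to verify three things in turn: that $g_*\alpha$ is a well-defined element of $\Omega(X,E)$ (writing $X$ for the base, denoted $M$ in the statement), that $g\mapsto g_*$ is a left group action by linear automorphisms of $\Omega(X,E)$, and that the resulting map $Aut(E)\times\Omega^n(X,E)\to\Omega^n(X,E)$ is smooth for the functional diffeology on $Aut(E)$ and the diffeology $\p(\Omega^n(X,E))$ introduced just above.

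First I would check well-definedness. Fix $n$, $\alpha\in\Omega^n(X,E)$ and $g=(\phi,\varphi)\in Aut(E)$. For any plot $p:O_p\to X$ the parametrisation $\varphi^{-1}\circ p$ is again a plot, since $\varphi^{-1}$ is smooth for the $G_E(X)$-diffeology of $X$; at $u\in O_p$ with $x=p(u)$, $\alpha_{\varphi^{-1}\circ p}$ restricts to a skew $n$-linear map $T_u^nO_p\to E_{\varphi^{-1}(x)}$, and composing with the linear map $\phi|_{E_{\varphi^{-1}(x)}}:E_{\varphi^{-1}(x)}\to E_x$ ($\phi$ being fiberwise linear and covering $\varphi$, cf. Lemma \ref{fiber}) gives a skew $n$-linear map $T_u^nO_p\to E_x$, as required; the order condition is immediate. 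For the compatibility condition, if $x\in\mathrm{Im}(p)\cap\mathrm{Im}(p')$ and $(Y_i)\in(T_uO_p)^n$, $(Y'_i)\in(T_{u'}O_{p'})^n$ push forward to the same germs of paths $(X_i)$ in $\mathrm{Im}(p)\cap\mathrm{Im}(p')$, then the $(\varphi^{-1})_*X_i$ are germs of paths in $\mathrm{Im}(\varphi^{-1}\circ p)\cap\mathrm{Im}(\varphi^{-1}\circ p')$ and $(\varphi^{-1}\circ p)_*Y_i=(\varphi^{-1})_*X_i=(\varphi^{-1}\circ p')_*Y'_i$; the defining property of $\alpha$ then gives $\alpha_{\varphi^{-1}\circ p}(Y_1,\dots,Y_n)=\alpha_{\varphi^{-1}\circ p'}(Y'_1,\dots,Y'_n)$ in $E_{\varphi^{-1}(x)}$, and applying $\phi$ yields the desired equality. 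Hence $g_*\alpha\in\Omega^n(X,E)$.

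Next I would record the action property: for $g=(\phi,\varphi)$ and $h=(\psi,\chi)$ one has $gh=(\phi\circ\psi,\varphi\circ\chi)$, so on every plot $p$, $((gh)_*\alpha)_p=(\phi\circ\psi)\circ\alpha_{\chi^{-1}\circ\varphi^{-1}\circ p}=\phi\circ(h_*\alpha)_{\varphi^{-1}\circ p}=(g_*(h_*\alpha))_p$, while $(Id_E)_*\alpha=\alpha$; linearity of each $g_*$ is clear. It remains to prove smoothness, which is where the only real work lies. Let $v\mapsto g(v)=(\phi_v,\varphi_v)$ and $v\mapsto\alpha_v$ be plots of $Aut(E)$ and of $\Omega^n(X,E)$ on an open $V\subseteq\R^k$; I must show that for every plot $p:O_p\to X$ the map $v\mapsto(g(v)_*\alpha_v)_p=\phi_v\circ(\alpha_v)_{\varphi_v^{-1}\circ p}$ lies in $C^\infty(V,\Omega^n(O_p,E))$.

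The main obstacle is that the ``test plot'' $\varphi_v^{-1}\circ p$ varies with $v$, so one cannot directly invoke the defining property of the plot $v\mapsto\alpha_v$. I would get around this by thickening: set $\tilde p:V\times O_p\to X$, $\tilde p(v,u)=\varphi_v^{-1}(p(u))$, which is a genuine plot of $X$ because $v\mapsto\varphi_v^{-1}$ is a plot of $G_E(X)$ (inversion being smooth in the diffeological group $G_E(X)$) and evaluation is smooth by Proposition \ref{functf}. Testing the plot $v'\mapsto\alpha_{v'}$ against $\tilde p$ gives that $(v',v,u)\mapsto\big[(\alpha_{v'})_{\tilde p}\big]_{(v,u)}$ is smooth; restricting to $v'=v$ and pulling back along the smooth slice inclusion $\iota_v:O_p\to V\times O_p$, $u\mapsto(v,u)$ --- legitimate since $\tilde p\circ\iota_v=\varphi_v^{-1}\circ p$, so $(\alpha_v)_{\varphi_v^{-1}\circ p}=\iota_v^*\big((\alpha_v)_{\tilde p}\big)$ by the precomposition property of $E$-valued forms recalled right after their definition --- shows that $v\mapsto(\alpha_v)_{\varphi_v^{-1}\circ p}$ is smooth into $\Omega^n(O_p,E)$. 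Finally $v\mapsto\phi_v$ is a plot of $Diff(E)$, hence $(v,e)\mapsto\phi_v(e)$ is smooth on $V\times E$, and applying $\phi_v$ fiberwise to the smoothly $v$-dependent $E$-valued form $(\alpha_v)_{\varphi_v^{-1}\circ p}$ keeps it smoothly $v$-dependent; composing gives the claim. The only delicate point, then, is the bookkeeping around the $v$-dependent test plot, resolved by the thickening $\tilde p$ together with the pullback-under-precomposition property.
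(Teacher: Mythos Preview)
Your proof is correct and considerably more detailed than what the paper offers: the paper gives no argument at all beyond the sentence ``The following proposition is straightforward from the definition of the diffeologies of $Aut(E)$ and of $G_E(X)$.'' In particular, the well-definedness and group-action verifications you supply are routine but genuinely needed, and the paper simply omits them.

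The one place where your argument goes beyond a mechanical unwinding of definitions is the smoothness check, and here you have identified and resolved the only real subtlety: the test plot $\varphi_v^{-1}\circ p$ depends on the parameter $v$, so one cannot literally apply the defining property of the diffeology $\p(\Omega^n(X,E))$ plot by plot. Your thickening $\tilde p(v,u)=\varphi_v^{-1}(p(u))$ together with the precomposition identity $\alpha_{\tilde p\circ\iota_v}=\iota_v^*(\alpha_{\tilde p})$ is exactly the right device, and it is presumably what the paper has in mind when it calls the result straightforward. So your approach is the natural one; you have simply made explicit what the paper leaves implicit.
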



	
	\subsection{Connections and covariant derivatives  on a vector pseudo-bundle}
	Classicaly, given a principal bundle $P$ with structure group $G$ with Lie algebra $\mathfrak{g},$ connections are 1-forms $\theta \in \Omega^1(P,\mathfrak{g})$ (see \cite{Igdiff,Ma2013}) that are covariant under the right action of $G,$ that is, denoting the right action of $g \in G$ by $R_g,$ we have: 
	$$ (R_g)_* \theta = Ad_{g^{-1}} \theta.$$ In this framework, the   de Rham differential and wedge product can be defined by using classically the Lie gracket of $\mathfrak{g}.$ 
	Here, our principal bundle is $Fr(E),$ which may not have only one structure group sunce the typical fiber of $E$ may vary, depending on the connected components of $(E,\p_{Aut(E)}), $ see Lemma \ref{fiber}. Therefore, for this section, one can restrict the study to vector bundles, that is, vector pseudo-bundles with typical fiber $F.$ We note by $\mathfrak{gl}(F)$ the tangent space at identity of $GL(F)$, and we note by $L(F)$ the space of endomorphisms of the diffeological vector space $F.$
	
	\begin{rem}
		In our framework, we are not sure that $L(E)=\mathfrak{gl}(E),$ while this is a classical fact when $E$ is a finite dimensional trivial vector bundle over a compact manifold. To our knowledge the best framework where one can find such properties when $L(F)$ is an infinite dimensional algebra can be found in \cite{Gl2002,GN2012}. The question of the comparison of $L(F)$ with $\mathfrak{gl}(F)$ remains open in the fully general framework. Moreover, $\mathfrak{gl}(F)$ is not a priori a set of endomorphisms of the diffeological vector space  $F$ since $F$ is not assumed to carry a complete topology which ensures an adequate convergence property. 
	\end{rem}
	 Therefore, one can define, along the lines of the discussion present in e.g. \cite{Ma2006}:
	 \begin{Definition}
	 	We define, for $x \in X,$ $$End_L(E)_x = \left\{ e \in C^\infty(E_x,E_x) \, | \, e \hbox{ is linear}\right\},$$
	 	and let 
	 	$$ End_L(E) = \coprod_{x \in X} End_L(E)_x.$$
	 \end{Definition}
	 \begin{Proposition}
	 $$End_{L}(E) = \left\{ e= f \circ l \circ f^{-1} \, | \,  f \in Fr(E) \hbox{ and } l \in L(F)\right\}$$
	\end{Proposition}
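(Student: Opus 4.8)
The plan is to prove the two inclusions separately, working fiberwise over each $x \in X$ and then checking that the identification is compatible with the diffeologies. For the inclusion $\supseteq$: given $f \in Fr(E)$ and $l \in L(F)$, write $x = \pi(f(\,\cdot\,))$ for the base point at which $f$ is a frame, i.e. $f \in Fr(E_x)$ is an isomorphism of diffeological vector spaces $F \to E_x$. Then $e = f \circ l \circ f^{-1}$ is a composition of smooth linear maps $E_x \to F \to F \to E_x$, hence a smooth linear endomorphism of $E_x$; thus $e \in End_L(E)_x \subseteq End_L(E)$. The only thing to verify here is that composition of smooth diffeological linear maps is smooth, which follows from Proposition \ref{functf}(2), and that $f^{-1}$ is smooth, which is part of the definition of $Fr(E_x)$ as the space of \emph{isomorphisms} of diffeological vector spaces.

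For the inclusion $\subseteq$: here I use that in this section $E$ is assumed to be a vector bundle with typical fiber $F$ (as stated at the beginning of Section 5.2), so that for every $x \in X$ the fiber $Fr(E_x)$ is nonempty — this is exactly where the typical-fiber hypothesis is essential. Take $e \in End_L(E)$, so $e \in End_L(E)_x$ for some $x$, meaning $e \in C^\infty(E_x,E_x)$ is linear. Pick any frame $f \in Fr(E_x)$ (nonempty by the above). Set $l = f^{-1} \circ e \circ f : F \to F$. Then $l$ is smooth and linear as a composition of smooth linear maps, so $l \in L(F)$, and a direct computation gives $f \circ l \circ f^{-1} = f \circ f^{-1} \circ e \circ f \circ f^{-1} = e$. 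Hence $e$ lies in the right-hand set.

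Finally, since the statement is phrased as an equality of diffeological spaces (both sides inheriting diffeologies — the left from $\p_{Aut(E)}(Fr(E))$-type constructions as in the previous subsection, the right as a quotient/image of $Fr(E) \times L(F)$ under $(f,l) \mapsto f \circ l \circ f^{-1}$), I would add a short remark that the bijection constructed above is smooth with smooth inverse. Smoothness of $(f,l)\mapsto f\circ l\circ f^{-1}$ follows again from Proposition \ref{functf}; for the inverse one notes that locally, along any plot of $Fr(E)$ trivializing the base via the identification $Fr(E_x)\cong GL(F)$ established in the Frame Bundle Theorem, the assignment $e \mapsto (f, f^{-1}\circ e\circ f)$ is expressed through smooth composition maps.

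The main obstacle I expect is not the algebraic identity, which is essentially trivial, but making the diffeological bookkeeping honest: one must be careful that $L(F)$ carries the functional diffeology as a subspace of $C^\infty(F,F)$, that $Fr(E)$ is taken with its $\p_{Aut(E)}$-diffeology, and that the map $f \mapsto f^{-1}$ on $Fr(E)$ is smooth in this diffeology — the latter is where one genuinely uses that $Fr(E)$ is (by the preceding theorem) a diffeological fiber bundle with structure group $GL(F)$, so that inversion is modeled on inversion in $GL(F)$, which is smooth because $GL(F)$ is a diffeological group. Once these identifications are in place, both inclusions and the smoothness of the bijection are routine applications of Proposition \ref{functf}.
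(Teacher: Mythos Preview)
Your proof of the two inclusions is correct and is essentially identical to the paper's own argument: for $\supseteq$ you note that $f\circ l\circ f^{-1}$ is smooth and linear by smoothness of composition, and for $\subseteq$ you pick any frame $f\in Fr(E_x)$ and conjugate to get $l=f^{-1}\circ e\circ f\in L(F)$. (The paper even writes the conjugation with a sign error, $l=f\circ e\circ f^{-1}$, which you have corrected.)

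One remark: your final two paragraphs about making the equality hold \emph{as diffeological spaces} are superfluous here. In the paper the statement is purely set-theoretic; the diffeology $\p_{L,Aut}$ on $End_L(E)$ is only \emph{defined} in the Definition immediately following this Proposition, precisely as the push-forward of the map $(f,l)\mapsto f\circ l\circ f^{-1}$. So there is nothing to check about smooth bijections or smooth inversion at this point --- the Proposition exists exactly to justify that this push-forward diffeology covers all of $End_L(E)$.
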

\begin{proof}
	Let $(f,l)\in Fr(E)\times L(F).$ Then $f \circ l \circ f^{-1}$ is smooth and linear by smoothness of the composition in the functional diffeology. Conversly, let $e \in End_L(E)_x, $ and let us fix any $f \in Fr(E)_x,$ then $l = f \circ e \circ f^{-1} \in L(F)$ for the same resaons. 
\end{proof}
\begin{Definition} \label{EndLE}
	We equip $End_L(E)$ with the push-forward diffeology $\p_{L,Aut}$ of the map: 
	$$ ReF: (f,l)\in Fr(E)\times L(F) \mapsto f \circ l \circ f^{-1}.$$
\end{Definition}
\begin{Proposition}
For the diffeology $\p_{L,Aut},$
	let $$D_L = \coprod_{x \in X} E_x \times End(E)_x$$ equipped with its subset diffeology in $E \times End(E).$ Then the evaluation map
	$$ ev : (x,e) \in D_L\mapsto ev_x(e) = e(x)$$
	is smooth.
\end{Proposition}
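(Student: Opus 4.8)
The plan is to test smoothness of $ev$ directly on plots, working locally and peeling off the three ``hidden'' evaluation maps that occur in the construction. Let $q$ be a plot of $D_L$. By the subset diffeology, $q=(q_1,q_2)$ where $q_1$ is a plot of $E$, $q_2$ is a plot of $(End_L(E),\p_{L,Aut})$, and for every $u$ in the domain the base point of $q_1(u)$ in $X$ coincides with the base point of the endomorphism $q_2(u)$ (this matching condition is exactly what the disjoint union $D_L$ encodes). Since $\p_{L,Aut}$ is the push-forward diffeology of $ReF$, every point of the domain has a neighbourhood on which $q_2=ReF\circ(f,l)=f\circ l\circ f^{-1}$ for a plot $f$ of $Fr(E)$ and a plot $l$ of $L(F)$ (the locally constant case is absorbed here, with $f,l$ constant). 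On such a neighbourhood $ev(q(u))=q_2(u)(q_1(u))=f(u)\big(l(u)(f(u)^{-1}(q_1(u)))\big)$, and since the diffeology of $E$ satisfies the locality axiom it suffices to prove that this last expression is a plot of $E$.

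First I would record three smoothness facts. (a) The evaluation $Fr(E)\times F\rightarrow E$, $(f,v)\mapsto f(v)$, is smooth: a plot of $Fr(E)$ is locally of the form $u\mapsto\phi(u)\circ f_0$ for a fixed frame $f_0$ and a plot $\Phi=(\phi,\varphi)$ of $Aut(E)\subset Diff(E)$, and since a plot of $Diff(E)$ is in particular a plot of $C^\infty(E,E)$, smoothness of composition/evaluation in the functional diffeology (Proposition \ref{functf}) gives the claim. (b) The fibrewise inverse evaluation $u\mapsto f(u)^{-1}(q_1(u))$ is a plot of $F$: writing $f(u)=\phi(u)\circ f_0$ with $f_0\in Fr(E_{x_0})$, one has $f(u)^{-1}=f_0^{-1}\circ\phi(u)^{-1}$; inversion is smooth on $Diff(E)$, so $u\mapsto\phi(u)^{-1}$ is again a plot of $C^\infty(E,E)$, hence $u\mapsto\phi(u)^{-1}(q_1(u))$ is a plot of $E$; the matching-base-point condition forces $q_1(u)\in E_{\varphi(u)(x_0)}$, so $\phi(u)^{-1}(q_1(u))\in E_{x_0}$, and post-composing with the fixed smooth linear isomorphism $f_0^{-1}\colon E_{x_0}\rightarrow F$ produces a plot of $F$. (c) The evaluation $L(F)\times F\rightarrow F$ is smooth, being the restriction of the evaluation $C^\infty(F,F)\times F\rightarrow F$. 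Chaining (b), then (c), then (a), we get that $u\mapsto f(u)^{-1}(q_1(u))$, then $u\mapsto l(u)(f(u)^{-1}(q_1(u)))$, then $u\mapsto f(u)(l(u)(f(u)^{-1}(q_1(u))))=ev(q(u))$ are plots of $F$, $F$, $E$ respectively; by locality $ev\circ q\in\p(E)$, so $ev$ is smooth.

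The main obstacle is step (b), the fibrewise inverse evaluation: one must genuinely use the defining constraint of $D_L$ to know that $q_1(u)$ lands in the fibre on which $f(u)^{-1}$ is defined, and one must invoke smoothness of inversion on $Diff(E)$ together with the fact that the $Aut(E)$-diffeology on $Fr(E)$ supplies only \emph{local} factorizations through the action, which is why the entire argument has to be localized first and then reassembled with the locality axiom. Everything else is a bookkeeping exercise in the cartesian closedness of the functional diffeologies already recorded in Proposition \ref{functf}.
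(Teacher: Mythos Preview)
Your proof is correct and follows the same line as the paper's own argument, which is a one-liner: it is enough to check that $(x,f,l)\mapsto f\circ l\circ f^{-1}(x)$ is smooth on the relevant fibred product in $E\times Fr(E)\times L(F)$, and this holds because composition in functional diffeologies is smooth. You have simply unpacked that appeal to ``composition is smooth'' into the three explicit evaluation steps (a), (b), (c), and carefully justified the local factorization through $ReF$ together with the base-point matching needed to make $f(u)^{-1}(q_1(u))$ well-defined; this extra care is welcome but does not constitute a different strategy.
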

\begin{proof}
		In order to check the smoothness of the evaluation map, it is sufficient to show that $$(x,f,l) \in E \times Fr(E) \times L(F) \mapsto f \circ l \circ f^{-1} (x)$$ is smooth, which is true since composition is smooth.  
\end{proof}
Therefore, one has two possible generalizations of the notion of connection to a diffeological pseudobundle: 
\begin{itemize}
	\item either connection 1-forms associated to the $GL(F)-$principal bundle which are $GL(F)-$covariant 1-forms in $\Omega^1(Fr(E),\mathfrak{gl}(F)),$ and we note this set of connections by $\mathcal{C}_{\mathfrak{gl}}(E),$
	\item or connection 1-forms that lie in $\Omega^1(M,End_L(E)).$
\end{itemize}
The space $\mathcal{C}_{\mathfrak{gl}}(E)$ seems to be the most natural for a straightforward generalization of the theory of connections in a principal bundle, and we have to explain why we highlight also the second framework. For this, we have to propose a generalization of the notion of covariant derivative. But before that we need to find a way back to the initial diffeology $\p$ of $E,$ since all our constructions are performed till now in the weaker diffeology $\p_{Aut(E)}.$ Concerning this direction, we must say that our investigations have failed in the whole generality that we expected. We present here the sole way that we found convincing.
\subsection{Covariant deriivatives in the initial diffeology}
The point of the diffeology $\p_{Aut(E)}$  consists in making different connected components the domains where the fibers of $E$ are not isomorphic, while the initial diffeology $\p,$ which is stronger than $\p_{Aut(E)},$ may have non-isomorphic fibers in the same connected component. For this, se need to define first a diffeology $\p_D$ on $D_E$ which completes already defined. We require for this diffeology that 
\begin{itemize}
	\item for the canonical projection $$\pi_1: D_E= \coprod_{x \in X} E_x \times End(E)_x \rightarrow E$$ satisfies
	$(\pi_1)_* (\p_D) = \p$ 
	\item the evaluation map $ev:(D_E, \p_D) \rightarrow (E,\p)$ is smooth.
\end{itemize}
Therefore, a natural diffeology can be defined.
\begin{Definition}
We define $\p_D$ the diffeology on $D_E$ such that 
$$\p_D = ev^*(\p) \cap \pi_1^*(\p).$$
\end{Definition}
Then, a natural diffeology arises also for $End(E):$
\begin{Definition}
	Let $\p_L$ be the diffeology on $End(E)$ defined by $$\p_L=(\pi_2)_*(\p_D),$$ where $\pi_2$ is the second coordinate projection $$\pi_2: D_E=\coprod_{x \in X} E_x \times End(E)_x \rightarrow End(E).$$
\end{Definition}
Obviously, $End(E)$ is a vector pseudo-dundle which fibers are all diffeological algebras, but we can state a little more:
\begin{Proposition}
	Addition and multiplication are smooth maps from $End(E)^{(2)}$ to $End(E).$
\end{Proposition}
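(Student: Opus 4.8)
The plan is to verify smoothness of addition and multiplication on $End(E)^{(2)}$ by pulling everything back to the diffeology $\p_D$ on $D_E$, where the fiberwise operations are controlled by the evaluation map and the underlying diffeology $\p$ of $E$. First I would recall that $\p_L = (\pi_2)_*(\p_D)$, so a plot of $End(E)$ is, locally, of the form $\pi_2 \circ q$ for some plot $q$ of $(D_E,\p_D)$; and a plot of $End(E)^{(2)}$ (with the pull-back diffeology from $End(E)^2$) is locally a pair $(\pi_2\circ q_1, \pi_2\circ q_2)$ with $q_1,q_2$ plots of $\p_D$ whose $End(E)$-components agree on the relevant fiber, i.e. $\pi_2\circ q_1$ and $\pi_2\circ q_2$ land over the same point of $X$. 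Since $\p_D = ev^*(\p)\cap \pi_1^*(\p)$, a map into $D_E$ is a plot exactly when both $ev$ and $\pi_1$ composed with it are plots of $(E,\p)$.

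Next I would write down the candidate plot for the operations. Given such $q_1, q_2$, the sum $e_1 + e_2$ (fiberwise addition of endomorphisms) is the composite $x \mapsto (\pi_2 q_1)(x) + (\pi_2 q_2)(x)$, and likewise the product is $x \mapsto (\pi_2 q_1)(x)\circ (\pi_2 q_2)(x)$; in both cases the base point $\pi\circ ev$ is unchanged. To check this is a plot of $\p_L$ it suffices, by definition of the push-forward, to exhibit a lift to $\p_D$, namely a plot $q$ of $D_E$ with $\pi_2\circ q$ equal to the above; one takes $q$ with $E$-component inherited appropriately (the zero section or any chosen section over the common base) and $End(E)$-component the sum resp. product. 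One then must check $ev\circ q$ and $\pi_1\circ q$ are plots of $(E,\p)$. For this, the key input is the already-established smoothness of the evaluation map $ev:(D_E,\p_D)\to(E,\p)$ together with fiberwise smoothness of $+$ and $.$ on $E$: the value $(e_1+e_2)(v) = e_1(v) + e_2(v)$ and $(e_1 \circ e_2)(v) = e_1(e_2(v))$ are obtained by composing $ev$ with itself and with the fiberwise addition $+:E^{(2)}\to E$, all of which are smooth, and $\pi_1\circ q$ is just one of the given plots composed with a projection.

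The main obstacle I expect is bookkeeping about the fiber-product diffeology: one must be careful that $End(E)^{(2)}$ carries the pull-back diffeology of the diagonal-over-$X$ inclusion into $End(E)^2$, so that the two input plots really do share a common base point, and that the lift to $\p_D$ respects this, i.e. that one can choose the $E_x$-coordinate of the lift to vary smoothly in $\p$ (the zero section $X\to E$ is smooth, which handles this). A second, more delicate point is that composition of endomorphisms $e_1\circ e_2$ evaluated at $v$ requires applying $ev$ twice, so one needs $(x,e)\mapsto (x, ev_x(e))$ or rather the composite $(x,e_1,e_2)\mapsto e_1(e_2(v))$ to be smooth; this follows because, after pushing through $ReF$, everything reduces to smoothness of composition in the functional diffeology of $C^\infty(F,F)$ as in the proof of the preceding proposition, but one should state this reduction explicitly. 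Once these identifications are in place, smoothness of $+$ and $\circ$ is a formal consequence, and I would close by remarking that the same argument, applied fiberwise, shows each $End(E)_x$ is a diffeological algebra, which was already noted.
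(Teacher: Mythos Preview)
Your plan is correct and carries out in detail what the paper dispatches in a single sentence (``Follows from the smoothness of addition in $E$''). The strategy of lifting a plot of $End(E)^{(2)}$ back to $\p_D$ via a chosen $E$-component and then checking the two defining conditions $ev\circ q\in\p$ and $\pi_1\circ q\in\p$ is exactly right.

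One remark: commit to the zero-section lift. With $q(u)=(0_{x(u)},\,e_1(u)+e_2(u))$ (resp.\ $e_1(u)\circ e_2(u)$), both $\pi_1\circ q$ and $ev\circ q$ equal the zero section over the smooth base path $x$, so the verification is immediate and you never actually need to invoke addition in $E$ beyond $0+0=0$, nor iterate $ev$. Your alternative route through an arbitrary section $v$ and the identity $(e_1+e_2)(v)=e_1(v)+e_2(v)$ runs into a circularity: to know that $u\mapsto e_i(u)(v(u))$ is a $\p$-plot you would need $(v,e_i)$ to be a $\p_D$-plot, but that requires precisely $ev\circ(v,e_i)=e_i(v)\in\p$, which is what you are trying to establish. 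The same circularity affects the ``apply $ev$ twice'' step for multiplication. None of this is needed once you fix $v=0$.
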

\begin{proof}
	Follows from the smoothness of addition in $E.$
\end{proof}
\begin{Definition} \label{CD}
	A covariant derivative $\nabla$ on $E$ is a collection of covariant derivatives $\left(p^*\nabla\right)_{p \in \p}$ such that 
	\begin{itemize}
		\item 	$\forall p \in \p$ with domain $O_p,$ $p^*\nabla$ is a covariant derivative on $p^*E,$ that is, $$p^*\nabla: TO_p \times \Gamma(O_p,p^*E) \rightarrow p^*E$$  is a smooth fiberwise bilinear map.
		\item $\forall p \in \p$ with domain $O_p,$ $\forall f \in C^\infty_c(O_p,\R), \forall X,Y \in TO_p \times \Gamma(O_p,p^*E), $
		$$ p^*\nabla_{fX}Y = f p^*\nabla_X Y $$
		and 
		$$ p^*\nabla_{X}(fY) = D_xf Y + f p^*\nabla_X Y.$$
		\item if $(p,p')\in \p^2,$ with domains $O_p$ and $O_{p'}$ such that there is a smooth map $g: O_{p'} \rightarrow O_p$ that satisfies $p' = p \circ g$ then $$(p')^*\nabla = g^*(p^* \nabla).$$  
	\end{itemize}
	We note by $\mathcal{CD}(E)$ the space of covariant derivatives on $E.$
\end{Definition}
\begin{Theorem}
	If $\mathcal{CD}(E)$ is non empty, then $\mathcal{CD}(E)$ is a diffeological affine space modelled on $\Omega^1(M,E).$
\end{Theorem}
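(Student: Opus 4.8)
The plan is to first establish that the difference of two covariant derivatives is an element of $\Omega^1(M,End_L(E))$ — or rather, after identifying $End_L(E)$-valued forms with $E$-valued forms via the evaluation, an element of $\Omega^1(M,E)$ in the sense defined above — and conversely that adding such a form to a covariant derivative yields another covariant derivative. Since the whole construction is plotwise, I would carry out everything on each plot $p\in\p$ with domain $O_p$ and then check compatibility under reparametrisations $p' = p\circ g$. So first, fix $\nabla^0\in\mathcal{CD}(E)$ (which exists by hypothesis). For any $\nabla\in\mathcal{CD}(E)$, set $A := \nabla - \nabla^0$, meaning $A_p := p^*\nabla - p^*\nabla^0$ for each $p$. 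The two Leibniz identities in the definition both contain the same inhomogeneous term $D_xf\cdot Y$, so it cancels in the difference, and $A_p$ is $C^\infty_c(O_p,\R)$-bilinear in both arguments; hence $A_p$ is a genuine tensor, i.e. a smooth bundle map $TO_p\to \mathrm{Hom}(p^*E,p^*E)$, equivalently a smooth section of $T^*O_p\otimes End(p^*E)$. The third bullet of the covariant-derivative definition, applied to both $\nabla$ and $\nabla^0$, gives $A_{p'} = g^*A_p$ when $p' = p\circ g$, which is exactly the compatibility condition in the definition of $\Omega^1(M,End_L(E))$ (and, after composing with the evaluation, of $\Omega^1(M,E)$). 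Thus $A\in\Omega^1(M,End_L(E))$.

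Next I would do the converse: given $A\in\Omega^1(M,End_L(E))$ and $\nabla^0\in\mathcal{CD}(E)$, define $\nabla := \nabla^0 + A$ plotwise by $p^*\nabla := p^*\nabla^0 + A_p$. One checks that each $p^*\nabla$ is smooth and fiberwise bilinear (sum of such), that the two Leibniz rules hold (the tensorial term $A_p$ contributes nothing inhomogeneous, so the $D_xf\cdot Y$ term is unchanged), and that the reparametrisation compatibility $(p')^*\nabla = g^*(p^*\nabla)$ follows from the corresponding properties of $\nabla^0$ and of $A$. Hence $\nabla\in\mathcal{CD}(E)$. This shows the action of the vector space $\Omega^1(M,End_L(E))$ on $\mathcal{CD}(E)$ by $(A,\nabla)\mapsto \nabla+A$ is free and transitive, i.e. $\mathcal{CD}(E)$ is an affine space modelled on $\Omega^1(M,End_L(E))$; identifying $End_L(E)$-valued $1$-forms with $E$-valued $1$-forms through the smooth evaluation map $ev$ of the earlier Proposition (which is how $\Omega^1(M,E)$ is to be read here, since $E$ itself has no fiberwise composition) gives the stated model space $\Omega^1(M,E)$.

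Finally I would address the diffeological structure. The space $\Omega^1(M,E)$ carries the diffeology $\p(\Omega^1(X,E))$ defined in the earlier Proposition, and $\mathcal{CD}(E)$ inherits a diffeology by transport: a parametrisation $q\colon O_q\to\mathcal{CD}(E)$ is a plot iff, after picking a basepoint $\nabla^0$, the map $u\mapsto q(u)-\nabla^0$ is a plot of $\Omega^1(M,E)$; equivalently, iff for every $p\in\p$ the family $u\mapsto (q(u))_p$ depends smoothly on $u$ in $C^\infty(O_q,\,\text{(covariant derivatives on }p^*E))$. One checks this is independent of the chosen basepoint because the translation maps of an affine space are diffeomorphisms for the functional/transported diffeology, and that the affine-space axioms (in the diffeological sense: the action map and the difference map are smooth) hold because they hold plotwise and the diffeology on each $\Omega^1(O_p,E)$ is the usual $C^\infty$ one.

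\textbf{Main obstacle.} I expect the only real subtlety is the identification, at the level of diffeologies, between $\Omega^1(M,End_L(E))$ and $\Omega^1(M,E)$ via the evaluation map: one must verify that $ev$ induces a \emph{bijection} (not merely a smooth map) between $End_L(E)$-valued and $E$-valued $1$-forms in the sense of the definition used in this paper — surjectivity requires that an $E$-valued form, evaluated against vectors, can be consistently lifted to an endomorphism-valued form, which uses that the construction is plotwise and that $End(p^*E)\to\mathrm{Hom}(\,\cdot\,,p^*E)$ behaves well — and that it is a diffeomorphism for the respective functional diffeologies. Everything else (the cancellation of the Leibniz inhomogeneous term, $C^\infty_c$-bilinearity forcing tensoriality, stability of the covariant-derivative axioms under adding a tensor, reparametrisation compatibility) is routine and proceeds exactly as in the finite-dimensional case, carried out one plot at a time.
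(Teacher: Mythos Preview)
Your argument follows exactly the paper's route: work plotwise, observe that the inhomogeneous Leibniz term cancels in the difference so that the difference is $C^\infty_c$-linear in the section variable and hence tensorial on each $O_p$, then check the converse by adding a $1$-form to a fixed covariant derivative. The paper's own proof is a terse version of precisely this.

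Where you go further than the paper is worth noting. First, you correctly observe that the natural target of $\nabla-\nabla^0$ is $\Omega^1(M,End_L(E))$ rather than $\Omega^1(M,E)$, and you flag the identification via $ev$ as the genuine subtlety; the paper simply writes ``fiberwise multiplication operator'' and asserts membership in $\Omega^1(X,E)$ without addressing this point, so your caution is well placed rather than excessive. Second, you supply the diffeological-affine-space structure (transported diffeology, basepoint independence, smoothness of the action and difference maps), which the paper's proof omits entirely despite the word ``diffeological'' appearing in the statement. Your verification of the reparametrisation compatibility $A_{p'}=g^*A_p$ is also explicit, whereas the paper leaves it implicit. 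In short: same strategy, but your version fills gaps the paper leaves open.
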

\begin{proof}
	The proof follows the standard proof for covariant derivatives in the category of (finite dimensional) vector bundles. We evaluate the conariant derivatives on each plot $p \in \p.$ 
	Let $\nabla^{1}$ and $\nabla^{(2)}$ be two covariant derivatives. Then,  $\forall p \in \p$ with domain $O_p,$ $\forall f \in C^\infty_c(O_p,\R), \forall X,Y \in TO_p \times \Gamma(O_p,p^*E), $ $$p^*\nabla^{(1)}_X (f Y) - p^*\nabla^{(2)}_X (f Y) = f p^*\nabla_X Y,$$
	and since $O_p$ is an open subset of an Euclidian space, it is sufficient to show that $p^*\nabla^{(1)}_X  - p^*\nabla^{(2)}_X $  is a fiberwise multiplication operator. Therefore, $p^*\nabla^{(1)}  - p^*\nabla^{(2)} \in \Omega^1(X,E).$
	
	Conversely, let $\alpha \in \Omega^1(X,E),$ let $\nabla \in \mathcal{CD}(E),$ then we have analyze $\nabla' = \nabla + \alpha.$ A straightforward computation on each $p \in \p$ shows that the desired relations are fulfilled.
\end{proof}
 
 \section{Two examples}
 \subsection{On infinite dimensional vector bundles}
 It is quite straightforward that Definition \ref{CD} coincides with the classical definition of a covariant derivative on a finite dimensional vector bundle. Let us now analyze how it fits or differs with the existing definitions of covariant derivatives for infinite dimensional settings. Before that, we have to highlight the problem of \emph{existence}, not only existence of at least one covariant derivative on an infinite dimensional vector bundle $E,$ but also existence of at least one non-identically null global section $X:M \rightarrow E,$ can be proven in the actual state of knowledge through the existence of a smooth partition of the unit subordinate to a system of local trivializations of a vector bundle. To our best knowledge, these conditions can be gathered in the ILH setting \cite{Om} along the lines of \cite{Br}. This aspect will not be discussed here. 
 
 A second problem that we have to highlight is the problem of smoothness of the evaluation of linear maps. Following the review \cite{Gl2022}, when $F$ is a locally convex vector space, and unless $F$ is a Banach space, 
 the evaluation map $$L(F) \times F \rightarrow F$$ is merely hypocontinuous. This leads to the definition of infinite dimensional vector bundle \emph{with structure group} where a prescribed Lie group $G$ with Lie algebra $\mathfrak{g}$ is such that $G$ and $\mathfrak{g}$ are acting smoothly on $F.$ With this assumption, the construction of a principal bundle of $G-$frames, a bundle of $G-$endomorphisms, a space of $\mathfrak{g}-$valued connection 1-forms and their $G-$covariant derivatives are safely defined, see e.g. \cite{Ma2006}. Let us now develop a remarrk from \cite{GMW2023}.
 For this, we assume that $F$ is a diffeological vector space with field of scalars a diffeological field $\mathbb{K}.$ 
 \begin{Proposition}
 	\begin{enumerate}
 		\item The space $\mathcal{L}(F)$ of smooth linear maps from $F$ to $F$ is an algebra {with smooth addition, smooth multiplication and smooth scalar multiplication.}
 		\item The group of invertible elements of $\mathcal{L}(F),$ that we note by $GL(F),$ is a diffeological subgroup of $\operatorname{Diff}(F).$ 
 	\end{enumerate}
 \end{Proposition}
 \begin{rem}
 	The space $L(F)$ is called a diffeological algebra with group of the units $GL(F).$ In this setting, $GL(F)$ is \textbf{not} equipped with the subset diffeology in $L(F).$ Its diffeology needs to make the inverse map smooth, which requires to consider a restricted diffeology that makes smooth the mapping $$ (.)^{-1} :  f \in GL(F) \mapsto f^{-1}\in GL(F).$$
 	This feature is well-known in the construction of the diffeology of the group of diffeomorphisms of a diffeological space.
 \end{rem}
 When $E$ and $F$ are locally convex complete topological vector spaces, the nebulae diffeologies $\p_\infty(E)$ and $\p_\infty(F)$ are well defined and the space of smooth linear maps $L(E,F)$ (in the classical sense, for G\^ateaux derivation) is alsoa  space of smooth maps in the diffeological sense. Therefore, we get (diffeological) smoothness for the evaluation map $$\mathcal{L}(E,F) \times E \rightarrow F$$
and also the smoothness of left and right compositions $$(g,l,g') \in GL(F) \times L(E,F) \times GL(E) \mapsto g \circ l \circ g' \in L(E,F).$$ 
In order to be able to describe the full picture of connections and covariant derivatives, we need to complete trivial statements by the following result, which is not optimal in the sense of generality but at least optimal in its simplicity of use in a wide class of examples:
\begin{Theorem}
	If $L(F)$ is separated by one-forms on $F,$ then $GL(F)$ is a diffeological Lie group.
\end{Theorem}
 \begin{proof}
 	Since $F$ is complete, we have that $T_{Id}GL(F) \subset L(F).$ Following \cite[Theorem 1.14]{Les}, considering the one forms $\alpha$ on $T_{Id}GL(F)$ defined by $\alpha = \beta \circ (.)$ where $\beta$ is a 1-form on $F,$ we get a family which separates $L(F)$ and hence $T_{Id}GL(F).$ Therefore, $GL(F)$ is a diffeological Lie group.
 \end{proof}

Till the end of this section, when we write $\mathfrak{gl}(F)$ instead of $T_{Id}GL(F)$  we assume that $G$ is a diffeological Lie group, and therefore that $\mathfrak{gl}(F)$ is a Lie algebra which is a diffeological vector subspace of $L(F),$ not necessarily with its subset diffeology, but with its diffeology of internal tangent space.

\begin{Theorem}
	If $E$ is a fiber bundle with typical fiber $F,$ a complete locally convex vector space separated by a family of 1-forms on $F,$ then \begin{itemize}
		\item the group $GL(F)$ is a structure group for $E.$ Therefore, there exists a frame bundle $Fr_F(E)$ which is a principal $GL(F)-$bundle. The connection 1-forms on $Fr_F(E)$ define a class $\mathcal{CD}_{GL}$ of covariant derivatives on $E.$
		\item  $\mathcal{CD}_{GL} \subset \mathcal{CD}$ and, if $\mathfrak{gl}(F) = L(F)$ as diffeological vector spaces, then $\mathcal{CD}_{GL} = \mathcal{CD}.$
	\end{itemize}
\end{Theorem} 
\begin{proof}
	Given a system of local trivialization $(\phi_i,\psi_i)_{i \in I}$ where $\{\psi_i\}_{i \in I}$ is an atlas on $M$ and  $$\left\{\phi_i:\psi_i^{-1}U_i \times F \rightarrow E \, | \, i \in I\right\}$$ produces the local trivializations.
	The local changes of trivializations $$\phi_i^{-1} \circ \phi_j: \psi_j^{-1}(U_i \cap U_j)\times F \rightarrow \psi_i^{-1}(U_i \cap U_j)\times F $$ define transtition maps with values in $GL(F)$ and therefore $E$ is a $GL(F)-$bundle. Therefore, following \cite{KM,Ma2006}, one produces a frame bundle, its connection $1-$forms, and then its covariant derivatives which is an affine space modelled on 1-forms with values in $\mathfrak{gl}(F)-$endomorphisms, which ends the first point.  
	The second point remains on the fact that $L(F) \subset \mathfrak{gl}(F).$
\end{proof}
We finish this example by the following remark: the frame bundle that we have constructed does relie on the group of automorphisms of $E,$ but not on the diffeology $\p_{Aut(E)}.$
Indeed, in this construction, the principal bundle $Fr_F(E)$ fully generates 
${GL}(E) $ by the surjective maps
$$ \forall x \in X, (f_1,f_2) \in Fr_F(E) \mapsto f_1 \circ f_2^{-1},$$
while the (full) diffeological  groups $Aut(E)$ and $Diff(X)$ remain poorly understood to our knowledge.

 Therefore the principal bundle $Fr_F(E)$ does not coincide with the principal bundle $Fr(E)$ defined in Definition \ref{frame-aut} by its diffeology $\p_{Aut(E)}$ which is actually unknown. However, this definition remains valid on infinite dimensional vector bundles, which group of automorphisms needs to be studied clearly.
 \subsection{Application to algebraic varieties}
 	Let $P \in \R[X_1,X_2,...X_n]$ and let us consider the algebraic variety $M = Ker P \subset \R^n$ equipped with its subset diffeology. This diffeology is a diffeology of Fr\"olicher space, but one can notice that this Fr\"olicehr structure does not have all its smooth functions that are restrictions of smooth functions on $\R^2,$ even if $M$ is closed. The algebraic variety $M$ is a diffeological space and its various tangent spaces, as well as its cotangent space and its space of differential forms, can be vector pseudo-bundles.
We note by $\mathcal{AV}(n)$ the set of algebraic varieties on $\R^n.$  We compare the various tangent spaces, the corresponding frame bundles and the covariant derivatives around two examples.

\begin{example}
	The example of $P(X,Y) = XY$, and $M = P^{-1}(0) \subset \R^2$ is discussed extensively in e.g. \cite{Ma2020-3} in terms of tangent spaces. The Diff-diffeology separates $M$ into five connected components, one isomorphic to one point and for isomorphic to $\R.$ Therefore, the diff-tangent space, as well as the internal tangent space and the external tangent space, noted independently by $E$ in the rest of the example, have the same frame bundle, and connection 1-forms are null at $(0,0),$ and may not connect even topologically (for the initial topology) with 1-forms on the other connected components of the Diff-diffeology. Therefore, there is and identification of $End_L(E)$ with $\mathcal{C}_{\mathfrak{gl}}(E)$ (remember that, from Definition \ref{EndLE}, the diffeology of $End_L(E)$ is derived from $\p_{Aut(E)}$). We can prove that the diffeology of $End(E)$ differ from their diffeology, and hence $\mathcal{CD}_{GL}$ and $\mathcal{CD}$ do not have the same model vector space. A direct computation shows that $\mathcal{CD}$is isomorphic (as a diffeological space) to $(\Omega^1(\R,\R))^2$ while $\mathcal{CD}_{GL}$ is diffeologically isomorphic with $(\Omega^1(\R,\R))^4$ with no geometrically meaningful canonical map between the two spaces.    
\end{example}  
\begin{example}
	Let $P(X,Y,Z)= X^2 + Y^2 - Z^2$ and let $M = P^{-1}(0) \subset \R^3$ be the standard cone. The same kind of considerations as in the previous example is valid: the Diff-diffeology separtes $M$ into three manifods, two of them are $2-$dimensional and isomorphic to $R\times S^1,$ while the last connected component is one point. With the same arguments, $\mathcal{CD}_{GL}$ is diffeologically isomorphic to $(\Omega^1(S^1 \times \R, \mathfrak{gl}(\R^2)))^2$ while $\mathcal{CD}$ is isomorphic to the set of $\mathfrak{gl}(\R^2)-$valued 1-forms $\theta$ such that, at $(x,y) \in \R \times S^1$ such that $y = 0,$  \begin{itemize}
		\item $\alpha_{(x,y)}(\partial_y)=0$
		\item $\alpha_{(x,y)}(\partial_x) $ is a projection is a map with values in $span(\partial_x).$
	\end{itemize} 
\end{example}
	\section{Outlook}
	The structures that we highlight here seem to be present but forgotten in more standard settings. Indeed, 
	\begin{itemize}
		\item the existence of the group $G_E(X),$ as a subgroup of $Diff(X),$ is already known for classical finite dimensional vector bundles see e.g. \cite{Neeb2008}, but it seems to be defined and not described in the common litterature. In the more general setting of vector pseudo-bundles, we can show here, through Lemma  \label{fiber}, that $G_E(X) \neq Diff(X).$ Moreover, through the existence of the diffeology $\p_{Aut(E)},$ one can guess that the following open question needs an answer: 
		
		\textbf{Open question:} in which case and for which diffeologies do we have $T_{Id_M}G_E(X)= T_{Id_M} Diff(X)$ and $T*_{Id_M}G_E(X)= T^*_{Id_M} Diff(X)?$
		\item the classical frame bundle $Fr(E),$ that we have constructed along the lines of the theory of principal bundles associated to finite dimensional fiber or vector bundles can be extended in the diffeological vector pseudo-bundles setting, seemingly, only with respect to the ``restricted'' diffeology $\p_{Aut(E)}$ and not with respect to the diffeology $\p$ of $E.$
		\item the notion of covariant derivative then seems somewhat disconnected to the notion of connection 1-form for diffeological vector pseudo-bundles because we showed how one can naturally extend the classical notion of a covariant derivative taking into account the diffeology $\p$ of $E.$  
	\end{itemize}
Therefore, the extension of the classical geometry of covariant derivatives along the diffeological framework here described seems to be possible. Future works will help us to decide how far. 
	
\end{document}